\definecolor{fillcolor}{HTML}{c0d3e1} 
\newcommand{\auth}[0]{Paolo Perrone}
\newcommand{\tit}[0]{Lifting couplings in Wasserstein spaces}
\newcommand{\kw}[0]{}
\numberwithin{equation}{section}
\theoremstyle{plain}
\newtheorem{thm}{Theorem}[section]
\newtheorem{lemma}[thm]{Lemma}
\newtheorem{prop}[thm]{Proposition}
\newtheorem{cor}[thm]{Corollary}
\newtheorem{deph}[thm]{Definition}
\theoremstyle{definition}
\newtheorem{remark}[thm]{Remark}
\newtheorem{eg}[thm]{Example}
\DeclareMathOperator{\Hom}{Hom}
\newcommand{\R}{\mathbb{R}}
\DeclareFontFamily{OT1}{pzc}{}
\DeclareFontShape{OT1}{pzc}{m}{it}{<-> s * [1.10] pzcmi7t}{}
\DeclareMathAlphabet{\mathpzc}{OT1}{pzc}{m}{it}
\mathchardef\mh="2D
 \newcommand{\cat}[1]{{\mathpzc{#1}}} 
\newcommand{\ar}[2][]{\arrow{#2}{#1}}
\newcommand{\op}{\mathrm{op}} 
\newcommand{\e}{\varepsilon}
\newcommand{\id}{\mathrm{id}} 
\newcommand{\intersection}{\cap}
\newcommand{\cost}{\mathrm{cost}}
\DeclareMathOperator*{\Opt}{Opt}
\DeclareRobustCommand{\cev}[1]{%
  \mathpalette\do@cev{#1}%
}
\newcommand{\do@cev}[2]{%
  \fix@cev{#1}{+}%
  \reflectbox{$\m@th#1\vec{\reflectbox{$\fix@cev{#1}{-}\m@th#1#2\fix@cev{#1}{+}$}}$}%
  \fix@cev{#1}{-}%
}
\newcommand{\fix@cev}[2]{%
  \ifx#1\displaystyle
    \mkern#23mu
  \else
    \ifx#1\textstyle
      \mkern#23mu
    \else
      \ifx#1\scriptstyle
        \mkern#22mu
      \else
        \mkern#22mu
      \fi
    \fi
  \fi
}
\let\originalleft\left
\let\originalright\right
\renewcommand{\left}{\mathopen{}\mathclose\bgroup\originalleft}
\renewcommand{\right}{\aftergroup\egroup\originalright}
\begin{document}
 \title{Lifting Couplings in Wasserstein Spaces}
 \date{}
 \author{Paolo Perrone}
 \email{paolo.perrone@cs.ox.ac.uk}
 \homepage{www.paoloperrone.org}
 \orcid{0000-0002-9123-9089}
 \affiliation{University of Oxford, Computer Science Department,
 	Oxford, UK}
 	
\newcommand{\authorsforheader}{Perrone, Paolo}
\newcommand{\paperdoi}{10.46298/compositionality-7-2}
\newcommand{\receiveddate}{2022-04-01}
\newcommand{\accepteddate}{2025-03-10}
 
\begin{abstract}
 This paper makes mathematically precise the idea that conditional probabilities are analogous to \emph{path liftings} in geometry.  
 
 The idea of lifting is modeled in terms of the category-theoretic concept of a \emph{lens}, which can be interpreted as a consistent choice of arrow liftings. The category we study is the one of probability measures over a given standard Borel space, with morphisms given by the couplings, or transport plans.
 
 The geometrical picture is even more apparent once we equip the arrows of the category with \emph{weights}, which one can interpret as ``lengths'' or ``costs'', forming a so-called \emph{weighted category}, which unifies several concepts of category theory and metric geometry. 
 Indeed, we show that the weighted version of a lens is tightly connected to the notion of \emph{submetry} in geometry.
 
 Every weighted category gives rise to a pseudo-quasimetric space via optimization over the arrows. In particular, Wasserstein spaces can be obtained from the weighted categories of probability measures and their couplings, with the weight of a coupling given by its cost. 
 In this case, conditional distributions allow one to form weighted lenses, which one can interpret as ``lifting transport plans, while preserving their cost''. 
\end{abstract}

\section{Introduction}

This paper explores a connection between conditional probabilities and liftings in geometry and topology. 

\paragraph{Projections and liftings.}
Forming a marginal of a probability measure is a destructive operation. Given a probability measure on a product space $X\times Y$, its marginal on $X$ is often called a \emph{projection}, borrowing from the language of geometry. The idea is that the marginal distribution captures only the variability in $X$, while the original joint measure may exhibit also variability in $Y$ (as well as possible correlations), in a way that's analogous to the projection onto the $X$-axis of a geometrical figure in the $XY$-plane. 
Similar intuitions can be had, more generally, whenever a probability measure is pushed forward along a noninjective measurable map, or when one restricts the measure to a coarser $\sigma$-algebra. 

There are several situations, in geometry and related fields, where one has a projection $p:E\to B$, and one wants to ``translate motions in $B$ to corresponding motions in $E$''. 
This idea is made mathematically precise in different ways, such as the following.
\begin{itemize}
 \item In algebraic topology, a \emph{fibration} $p:E\to B$ satisfies in particular a path-lifting property. Given a point $e\in E$ and a curve $c$ in $B$ starting at $p(e)$, we can lift $c$ to a curve $\tilde c$ in $E$ starting at $e$ which is projected by $p$ back to $c$. 
 \item In metric geometry, a \emph{submetry} $p:E\to B$ can be seen as ``lifting distances''. Consider a point $e\in E$, and a point $b\in B$ which has distance $r$ from $p(e)$. Then we can lift $b$ to a point of $E$ which has distance $r$ from $e$. (See also \Cref{lenses}.)
 \item In differential topology, a \emph{bundle} may be equipped with a \emph{connection}, which can be interpreted as a way to lift vectors, or also paths (in an open neighborhood), in a way which preserves concatenation. 
\end{itemize}
We are particularly interested in liftings that \emph{preserve composition} (of curves, displacements, etc.). These can be effectively modeled via the category-theoretic notion of a \emph{lens}, which we recall in \Cref{lenses}. Intuitively, a lets consists of a functor $F:\cat{E}\to \cat{B}$ together with a ``lifting'' assignment which, in analogy to the points above, takes an object $E$ of $\cat{E}$ and a morphism of $\cat{B}$ with domain $F(E)$, and gives a morphism of $\cat{E}$ with domain $E$, in such a way to preserve identity and composition. (See \Cref{lenses} for the details.)

The main focus of this work is the idea that by means of conditional distributions, one can construct a similar lifting to the ones described above. 
Namely, take a surjective, measurable function between measurable spaces $f:X\to Y$, and consider a probability measure $p$ on $X$. Take the pushforward measure $f_\sharp p$ on $Y$, and consider a transport plan (a.k.a.~coupling) from $f_\sharp p$ to another measure $q$ on $Y$. Under what conditions can we lift the transport plan on $Y$ to a transport plan on $X$?
And when does this notion of lifting preserve the composition of transport plans?

We will show that these liftings exist whenever $f$ is equipped with the structure of a \emph{measurable lens} (\Cref{main}). This in particular includes the case of product projections, which at the level of probability measures give marginals -- and in that case, the lifting is given by forming a particular conditional product (\Cref{prodlens}).

\paragraph{Optimal transport and couplings.}
Consider two probability distributions $p$ and $q$ on a measurable space $X$. 
A \emph{coupling} or \emph{transport plan} between $p$ and $q$ is a joint probability distribution $s$ on $X\times X$ such that its marginals are $p$ and $q$. 
The name ``transport plan'' is motivated by the intuition that $s$ can be considered a specified way of moving the mass of $p$ to the configuration $q$. 
This is most visible in the discrete case. So, for an illustration, suppose that $X$ is a finite set. A probability distribution $p$ on $X$ amounts to a function $p:X\to[0,1]$ such that 
\[
 \sum_{x\in X} p(x) = 1 .
\]
So let $p$ and $q$ be distributions on $X$. A coupling $s$ of $p$ and $q$ is then a function $s:X\times X\to[0,1]$ such that
\[
 \sum_{x'\in X} s(x,x') = p(x) \qquad\mbox{and}\qquad \sum_{x'\in X} s(x',x) = q(x)
\]
for all $x\in X$.
For example, if $X=\{x,y\}$, consider the probability measures $p$ and $q$ given as follows.
\[
 p = 
 \begin{tikzpicture}[baseline,
 x={(0:1cm)},y={(90:1cm)},z={(0:1cm)}]
  \draw (0,0,-2) -- (2,0,-2) ;
  \draw [fill=fillcolor!50] (0.55,0,-2) -- ++(0.25,0,0) -- ++(0,1,0) -- ++(-0.5,0,0) -- ++(0,-1,0) -- ++(0.25,0,0);
  \node at (0.55,-0.2,-2) {$x\strut$};
  \node at (0.1,0.5,-2) {$\frac{1}{2}$};
  \draw [fill=fillcolor!50] (1.45,0,-2) -- ++(0.25,0,0) -- ++(0,1,0) -- ++(-0.5,0,0) -- ++(0,-1,0) -- ++(0.25,0,0);
  \node at (1.45,-0.2,-2) {$y\strut$};
  \node at (1.9,0.5,-2) {$\frac{1}{2}$};
 \end{tikzpicture}
 \qquad\qquad
 q = 
 \begin{tikzpicture}[baseline,
 x={(0:1cm)},y={(90:1cm)},z={(0:1cm)}]
  \draw (4,0,0) -- (4,0,2) ;
  \draw [fill=fillcolor!50] (4,0,0.55) -- ++(0,0,0.25) -- ++(0,0.5,0) -- ++(0,0,-0.5) -- ++(0,-0.5,0) -- ++(0,0,0.25);
  \node at (4,-0.2,0.55) {$x\strut$};
  \node at (4,0.25,0.1) {$\frac{1}{4}$};
  \draw [fill=fillcolor!50] (4,0,1.45) -- ++(0,0,0.25) -- ++(0,1.5,0) -- ++(0,0,-0.5) -- ++(0,-1.5,0) -- ++(0,0,0.25);
  \node at (4,-0.2,1.45) {$y\strut$};
  \node at (4,0.75,1.9) {$\frac{3}{4}$};
 \end{tikzpicture}
\]

Here is a possible coupling between $p$ and $q$.
\[
\begin{tabular}{c|cc}
 $s$ & $x$ & $y$ \\
 \hline
 $x$ & $\frac{1}{4}$ & $\frac{1}{4}$ \\
 $y$ & $0$ & $\frac{1}{2}$
\end{tabular}
\qquad\qquad
\begin{tikzpicture}[baseline={(current bounding box.center)},
    x={(-15:1cm)},y={(90:1cm)},z={(15:1cm)}]
  \draw (0,0,0) -- (2,0,0) -- (2,0,2) -- (0,0,2) -- (0,0,0);
  
  \draw [dotted] (0.55,0,2) -- (0.55,0,-2);
  \draw [dotted] (1.45,0,2) -- (1.45,0,-2);
  \draw [dotted] (0,0,0.55) -- (4,0,0.55);
  \draw [dotted] (0,0,1.45) -- (4,0,1.45);
 
  \draw (0,0,-2) -- (2,0,-2) ;
  \draw [fill=fillcolor!50] (0.55,0,-2) -- ++(0.25,0,0) -- ++(0,1,0) -- ++(-0.5,0,0) -- ++(0,-1,0) -- ++(0.25,0,0);
  \node at (0.55,-0.2,-2) {$x\strut$};
  \node at (0.1,0.5,-2) {$\frac{1}{2}$};
  \draw [fill=fillcolor!50] (1.45,0,-2) -- ++(0.25,0,0) -- ++(0,1,0) -- ++(-0.5,0,0) -- ++(0,-1,0) -- ++(0.25,0,0);
  \node at (1.45,-0.2,-2) {$y\strut$};
  \node at (1.9,0.5,-2) {$\frac{1}{2}$};
 
  \draw (4,0,0) -- (4,0,2) ;
  \draw [fill=fillcolor] (4,0,0.55) -- ++(0,0,0.25) -- ++(0,0.5,0) -- ++(0,0,-0.5) -- ++(0,-0.5,0) -- ++(0,0,0.25);
  \node at (4,-0.2,0.55) {$x\strut$};
  \node at (4,0.25,0.1) {$\frac{1}{4}$};
  \draw [fill=fillcolor] (4,0,1.45) -- ++(0,0,0.25) -- ++(0,1.5,0) -- ++(0,0,-0.5) -- ++(0,-1.5,0) -- ++(0,0,0.25);
  \node at (4,-0.2,1.45) {$y\strut$};
  \node at (4,0.75,1.9) {$\frac{3}{4}$};
  
  \draw [fill=fillcolor!50] (0.55,0,1.45)++(0,0,-0.25) -- ++(0.25,0,0) -- ++(0,0.5,0) -- ++(-0.5,0,0) -- ++(0,-0.5,0) -- ++(0.25,0,0);
  \draw [fill=fillcolor] (0.55,0,1.45)++(0.25,0,0) -- ++(0,0,0.25) -- ++(0,0.5,0) -- ++(0,0,-0.5) -- ++(0,-0.5,0) -- ++(0,0,0.25);
  \draw [fill=fillcolor!25] (0.55,0.5,1.45)++(0.25,0,-0.25) -- ++(-0.5,0,0) -- ++(0,0,0.5) -- ++(0.5,0,0) -- ++(0,0,-0.5);
  
  \draw [fill=fillcolor!50] (0.55,0,0.55)++(0,0,-0.25) -- ++(0.25,0,0) -- ++(0,0.5,0) -- ++(-0.5,0,0) -- ++(0,-0.5,0) -- ++(0.25,0,0);
  \draw [fill=fillcolor] (0.55,0,0.55)++(0.25,0,0) -- ++(0,0,0.25) -- ++(0,0.5,0) -- ++(0,0,-0.5) -- ++(0,-0.5,0) -- ++(0,0,0.25);
  \draw [fill=fillcolor!25] (0.55,0.5,0.55)++(0.25,0,-0.25) -- ++(-0.5,0,0) -- ++(0,0,0.5) -- ++(0.5,0,0) -- ++(0,0,-0.5);
  
  \draw [fill=fillcolor!50] (1.45,0,1.45)++(0,0,-0.25) -- ++(0.25,0,0) -- ++(0,1,0) -- ++(-0.5,0,0) -- ++(0,-1,0) -- ++(0.25,0,0);
  \draw [fill=fillcolor] (1.45,0,1.45)++(0.25,0,0) -- ++(0,0,0.25) -- ++(0,1,0) -- ++(0,0,-0.5) -- ++(0,-1,0) -- ++(0,0,0.25);
  \draw [fill=fillcolor!25] (1.45,1,1.45)++(0.25,0,-0.25) -- ++(-0.5,0,0) -- ++(0,0,0.5) -- ++(0.5,0,0) -- ++(0,0,-0.5);
  
  \draw [fill=fillcolor!25] (1.45,0,0.55)++(0.25,0,-0.25) -- ++(-0.5,0,0) -- ++(0,0,0.5) -- ++(0.5,0,0) -- ++(0,0,-0.5);
\end{tikzpicture}
\]
We can interpret this coupling as a transport plan as follows, following the picture from left to right:
\begin{itemize}
 \item Split the mass at $x$ (which is $\tfrac{1}{2}$) into $\frac{1}{4}$ and $\frac{1}{4}$, keep $\frac{1}{4}$ at $x$, and move the rest to $y$;
 \item Leave all the mass at $y$ (which is $\tfrac{1}{2}$) in place.
\end{itemize}
Here is another possible coupling. 
\[
\begin{tabular}{c|cc}
 $t$ & $x$ & $y$ \\
 \hline
 $x$ & $\frac{1}{8}$ & $\frac{3}{8}$ \\
 $y$ & $\frac{1}{8}$ & $\frac{3}{8}$
\end{tabular}
\qquad\qquad
\begin{tikzpicture}[baseline={(current bounding box.center)},
    x={(-15:1cm)},y={(90:1cm)},z={(15:1cm)}]
  \draw (0,0,0) -- (2,0,0) -- (2,0,2) -- (0,0,2) -- (0,0,0);
  
  \draw [dotted] (0.55,0,2) -- (0.55,0,-2);
  \draw [dotted] (1.45,0,2) -- (1.45,0,-2);
  \draw [dotted] (0,0,0.55) -- (4,0,0.55);
  \draw [dotted] (0,0,1.45) -- (4,0,1.45);
 
  \draw (0,0,-2) -- (2,0,-2) ;
  \draw [fill=fillcolor!50] (0.55,0,-2) -- ++(0.25,0,0) -- ++(0,1,0) -- ++(-0.5,0,0) -- ++(0,-1,0) -- ++(0.25,0,0);
  \node at (0.55,-0.2,-2) {$x\strut$};
  \node at (0.1,0.5,-2) {$\frac{1}{2}$};
  \draw [fill=fillcolor!50] (1.45,0,-2) -- ++(0.25,0,0) -- ++(0,1,0) -- ++(-0.5,0,0) -- ++(0,-1,0) -- ++(0.25,0,0);
  \node at (1.45,-0.2,-2) {$y\strut$};
  \node at (1.9,0.5,-2) {$\frac{1}{2}$};
 
  \draw (4,0,0) -- (4,0,2) ;
  \draw [fill=fillcolor] (4,0,0.55) -- ++(0,0,0.25) -- ++(0,0.5,0) -- ++(0,0,-0.5) -- ++(0,-0.5,0) -- ++(0,0,0.25);
  \node at (4,-0.2,0.55) {$x\strut$};
  \node at (4,0.25,0.1) {$\frac{1}{4}$};
  \draw [fill=fillcolor] (4,0,1.45) -- ++(0,0,0.25) -- ++(0,1.5,0) -- ++(0,0,-0.5) -- ++(0,-1.5,0) -- ++(0,0,0.25);
  \node at (4,-0.2,1.45) {$y\strut$};
  \node at (4,0.75,1.9) {$\frac{3}{4}$};
  
  \draw [fill=fillcolor!50] (0.55,0,1.45)++(0,0,-0.25) -- ++(0.25,0,0) -- ++(0,0.75,0) -- ++(-0.5,0,0) -- ++(0,-0.75,0) -- ++(0.25,0,0);
  \draw [fill=fillcolor] (0.55,0,1.45)++(0.25,0,0) -- ++(0,0,0.25) -- ++(0,0.75,0) -- ++(0,0,-0.5) -- ++(0,-0.75,0) -- ++(0,0,0.25);
  \draw [fill=fillcolor!25] (0.55,0.75,1.45)++(0.25,0,-0.25) -- ++(-0.5,0,0) -- ++(0,0,0.5) -- ++(0.5,0,0) -- ++(0,0,-0.5);
  
  \draw [fill=fillcolor!50] (0.55,0,0.55)++(0,0,-0.25) -- ++(0.25,0,0) -- ++(0,0.25,0) -- ++(-0.5,0,0) -- ++(0,-0.25,0) -- ++(0.25,0,0);
  \draw [fill=fillcolor] (0.55,0,0.55)++(0.25,0,0) -- ++(0,0,0.25) -- ++(0,0.25,0) -- ++(0,0,-0.5) -- ++(0,-0.25,0) -- ++(0,0,0.25);
  \draw [fill=fillcolor!25] (0.55,0.25,0.55)++(0.25,0,-0.25) -- ++(-0.5,0,0) -- ++(0,0,0.5) -- ++(0.5,0,0) -- ++(0,0,-0.5);
  
  \draw [fill=fillcolor!50] (1.45,0,1.45)++(0,0,-0.25) -- ++(0.25,0,0) -- ++(0,0.75,0) -- ++(-0.5,0,0) -- ++(0,-0.75,0) -- ++(0.25,0,0);
  \draw [fill=fillcolor] (1.45,0,1.45)++(0.25,0,0) -- ++(0,0,0.25) -- ++(0,0.75,0) -- ++(0,0,-0.5) -- ++(0,-0.75,0) -- ++(0,0,0.25);
  \draw [fill=fillcolor!25] (1.45,0.75,1.45)++(0.25,0,-0.25) -- ++(-0.5,0,0) -- ++(0,0,0.5) -- ++(0.5,0,0) -- ++(0,0,-0.5);
  
  \draw [fill=fillcolor!50] (1.45,0,0.55)++(0,0,-0.25) -- ++(0.25,0,0) -- ++(0,0.25,0) -- ++(-0.5,0,0) -- ++(0,-0.25,0) -- ++(0.25,0,0);
  \draw [fill=fillcolor] (1.45,0,0.55)++(0.25,0,0) -- ++(0,0,0.25) -- ++(0,0.25,0) -- ++(0,0,-0.5) -- ++(0,-0.25,0) -- ++(0,0,0.25);
  \draw [fill=fillcolor!25] (1.45,0.25,0.55)++(0.25,0,-0.25) -- ++(-0.5,0,0) -- ++(0,0,0.5) -- ++(0.5,0,0) -- ++(0,0,-0.5);
\end{tikzpicture}
\]
Again, we can interpret this as follows:
\begin{itemize}
 \item Split the mass at $x$ (which is $\tfrac{1}{2}$) into $\frac{1}{8}$ and $\frac{3}{8}$, keep $\frac{1}{8}$ at $x$, and move $\frac{3}{8}$ to $y$;
 \item Split the mass at $x$ (which is $\tfrac{1}{2}$) into $\frac{1}{8}$ and $\frac{3}{8}$, move $\frac{1}{8}$ to $x$, and keep $\frac{3}{8}$ at $y$.
\end{itemize}
We notice a few things.
\begin{enumerate}
 \item A transport plan from $p$ to $q$ can also be read backwards, as a transport plan from $q$ to $p$.
 \item Transport plans can be composed: provided some technical conditions are satisfied, they form a category (see \Cref{catcouplings}). Since a transport plan can be read in both directions, this category has a natural dagger structure.
 \item The first example, intuitively, ``moves less mass'' than the second example (it only moves $\tfrac{1}{4}$ from $x$ to $y$, versus the second example that moves $\tfrac{3}{8}$ from $y$ to $x$ and $\tfrac{1}{8}$ from $y$ back to $x$, for a total of $\tfrac{1}{2}$).
 In some sense, the first transport plan is ``more optimal'' or ``less wasteful'' than the second one. 
\end{enumerate}
To make the last point precise it is convenient to make use of \emph{cost functions}. 

\paragraph{Cost functions, distances, and weights.}
In the context of transport plans, one is often interested in the \emph{cost} of a transport plan whenever the underlying space is equipped with a suitable cost function, for instance, a metric. This is part of the so-called theory of \emph{(optimal) transport} (see the first chapters of~\cite{villani} for an overview). 

A cost function on $X$ is a function $c:X\times X\to [0,\infty]$. For the purposes of the present paper, we can interpret the quantity $c(x,y)$ as the ``cost of going from $x$ to $y$''. We are in particular interested in cost functions such that $c(x,x)=0$, and satisfying the triangle inequality $c(x,z)\le c(x,y)+c(y,z)$. 
If $X$ is equipped with such a cost function, transport plans can be assigned a cost as well (see \Cref{catcouplings}). The cost of a transport plan between the distributions $p$ and $q$ can be interpreted as the cost of ``moving the mass from the pile $p$ to the pile $q$ according to the specified plan''. 
If the cost function $c$ satisfies a triangle inequality, the costs of transport plans satisfy one too, whenever two plans are composed. 
The resulting structure is called a \emph{weighted category}, or \emph{normed category} (see \Cref{terminology} for the terminology), a particular kind of enriched category. 

Weighted categories have been around at least since \cite{lawvere}, and have recently sparked some interest in different fields of mathematics, including geometry and topological data analysis \cite{interleaving,categorieswithnorms,schroderbernstein}. They can be considered a common generalization of categories and metric spaces.
In optimal transport theory, given probability measures $p$ and $q$ on a metric space, one can define their \emph{Wasserstein distance} by taking the infimum of the cost of transport plans between $p$ and $q$ (see \Cref{catcouplings} as well as \cite[Section~6]{villani}). This is an instance of a more general categorical construction: given objects $X$ and $Y$ of a weighted category, one can consider all the weights of the morphisms $X\to Y$ and take their infimum, the ``minimal cost of going from $X$ to $Y$'', and obtain this way a pseudo-quasimetric between the objects.
Several metric spaces appearing in mathematics can be obtained in this way: besides Wasserstein spaces, also the Gromov-Hausdorff distance is an example of this construction, as defined for example in \cite[Proposition~7.3.16 and Exercise~7.3.26]{burago}.  
The notion of weighted category is therefore very ``primitive'', it underlies several contemporary geometric constructions. 

As we said above, the main theme of this work is the idea of \emph{lifting}, which is common to both category theory and geometry, and therefore well-suited for treatment in terms of weighted categories. 
Indeed, lenses in category theory and submetries in metric geometry are very much related. We incorporate the two concepts by defining \emph{weighted lenses} in \Cref{lenses}, which are lenses where the liftings are weight-preserving.

The main result of this work shows that under some natural conditions, a metric lens between two spaces gives rise to a weighted lens between the respective categories of couplings (\Cref{main}), making mathematically precise the idea of ``lifting transport plans while preserving their cost''.

\paragraph{Outline.}
In \Cref{wcats} we review some of the basic concepts of weighted category theory.
The definitions of embedding and of symmetry for weighted categories seem to be new, but can be seen as special cases of the enriched-categorical notion.

In \Cref{lenses} we consider \emph{lenses}, the categorical structures formalizing the idea of ``lifting'' used in this work. We review two related notions of lens, for sets and for categories, and define weighted versions of them. We then show the link with submetries in metric geometry, which seems to be a new idea. 

In \Cref{catcouplings} we define, for each standard Borel space $X$, a category $PX$ whose objects are probability measures on $X$, whose morphisms are couplings, and where composition is given by means of conditional products. If $X$ is equipped with a cost function, we can turn $PX$ into a weighted category (in different ways, one for each integer, analogously to $L^p$ spaces), which after optimization give the famous Wasserstein spaces. 

In \Cref{mains} we state and prove the main statements of this work. 
\Cref{pushforward} and \Cref{embedding} show that pushforwards of probability measures are functorial, and that pushforwards along embeddings are embeddings. 
The most important result is \Cref{main}, which shows that a set-based lens between two spaces gives rise to a categorical lens between the respective spaces of probability measures, effectively lifting entire couplings along the chosen lifting of points. This gives in particular a functor between measurable lenses and weighted lenses (\Cref{funlensestolenses}). In \Cref{prodlens} we show that this includes the special case of product projections, where the lifting is given by taking the conditional product. 
All these results come with a weighted version in the metric and pseudo-quasimetric case. 

\paragraph{Acknowledgements.}

I would like to thank Tobias Fritz, Slava Matveev, and Sharwin Rezagholi for the fruitful conversations which eventually led to this work, and hopefully to many more. 
I would also like to thank Bryce Clarke and Matthew DiMeglio and David Spivak for the pointers on lenses, 
and Sam Staton and his whole group for the support and for the interesting discussions.
Finally, I thank the anonymous reviewers for their very helpful remarks and suggestions. 
 
\section{Weighted categories}\label{wcats}

Here we recall the main ideas of weighted categories and functors, their relationship with ordinary categories and with metric spaces, and their notion of symmetry, isomorphism, and embedding. 
Some of the definition given here are technically new (such as the notion of symmetry, \Cref{defsym}), but very similar to well known structures (such as dagger categories). 

\subsection{Main definitions}
 
\begin{deph}\label{defwcat}
 A \emph{weighted category} is a category $\cat{C}$ where to each arrow $f:X\to Y$ we assign a value $w(f)\in [0,\infty]$ called the \emph{weight} of $f$,
 such that
 \begin{itemize}
  \item all identities have weight zero;
  \item for every pair of composable arrows $f:X\to Y$ and $g:Y\to Z$, we have the ``triangle inequality''
  \begin{equation}\label{triangleinequality}
   w(g\circ f) \le w(g) + w(f) .
  \end{equation}
 \end{itemize}
\end{deph}
 
In this work we will mostly consider small categories, where the objects form a set. This will always be implicitly assumed from now on, unless otherwise stated.

\begin{remark}\label{terminology}
Some other authors use other names, such as ``normed categories''.
Also \cite{grandisweights} and \cite{interleaving} call these ``weighted categories''. In particular, in \cite{grandisweights} it is argued that these structures are more lax than what one may want from a norm, they are more similar to \emph{seminorms}. Probably for similar reasons, in \cite{schroderbernstein} these structures are called ``seminormed categories''. 
The original works~\cite{lawvere} and~\cite{categorienormate}, call them ``normed categories'' (the latter in Italian, ``categorie normate''), while
\cite{categorieswithnorms} and \cite{schroderbernstein} reserve that term for weighted categories which satisfy (different) special properties. (For the latter, see also our \Cref{isoopt}.)
We choose the name ``weighted categories'' as it seems to be the one giving rise to the least ambiguity. 
\end{remark}

Readers interested in enriched category theory~\cite{basicconcepts} can view weighted categories as enriched categories, where the base of the enrichment is the category of weighted sets~\cite[Section~1.4]{grandisweights}. We will not explicitly adopt this point of view in this work, except for some conventions. The enrichment approach is worked out in detail in~\cite{grandisweights}.  

Every ordinary category can be considered a weighted category where all arrows have weight zero.
Because of that, all the statements that we make for weighted categories also hold for ordinary categories. 
At the other end of the spectrum, weighted categories also generalize metric spaces, and pseudo-quasimetric spaces.
 
\begin{eg}\label{deflms}
 A \emph{pseudo-quasimetric space} (more briefly, \emph{pq-metric space}) or \emph{Lawvere metric space} is a set $X$ together with a ``cost'' function $c:X\times X\to [0,\infty]$, such that 
 \begin{itemize}
  \item for each $x\in X$, we have $c(x,x)=0$;
  \item for each $x$, $y$ and $z\in X$, we have the triangle inequality
  $$
  c(x,z) \le c(x,y) + c(y,z) .
  $$
 \end{itemize}
 
 A pq-metric space can be seen as a weighted category with exactly one arrow between any two objects $x$ and $y$, and the quantity $c(x,y)$ is the weight of the arrow.
\end{eg}

 In analogy to metric spaces, we will sometimes call $c$ \emph{distance}. Note that in general, for a pq-metric,
\begin{itemize}
 \item if $c(x,y)=0$, it is not necessarily the case that $x=y$;
 \item symmetry is not required: $c(x,y)$ can be different from $c(y,x)$. In particular, one of the two may be zero when the other one is not;
 \item infinities are allowed.
\end{itemize}

Here is another example, again from metric geometry.

\begin{eg}\label{lipschitzcat}
 Consider the category $\cat{Lip}$ where
 \begin{itemize}
  \item objects are metric spaces;
  \item morphisms are (all) functions;
  \item the weight of a morphism $f:X\to Y$ is the maximum between $0$ and the \emph{logarithm} of its Lipschitz constant,
  $$
  \sup_{x,x'\in X} \log \dfrac{d\big(f(x),f(x')\big)}{d(x,x')} ,
  $$
  which is possibly infinite (and with the convention that $0/0=1$). 
 \end{itemize}
 $\cat{Lip}$ is a (large) weighted category. (One can also define \emph{multiplicative weights} and avoid taking the logarithm, see \cite{grandisweights} for more on this.), as the formula \eqref{lenscond} will show. 
\end{eg}

Every weighted category gives a pq-metric space by ``optimization over the arrows'', as follows. 

\begin{deph}
 Let $\cat{C}$ be a weighted category. The \emph{optimization} of $\cat{C}$ is the pq-metric space $\Opt(\cat{C})$ where the points are the objects of $\cat{C}$, and the distance is given by 
 \begin{equation}\label{infmetric}
  c(X,Y) \coloneqq \inf_{f:X\to Y} w(f) ,
 \end{equation} 
 where by convention, if there are no arrows from $X$ to $Y$, the infimum over the empty set gives $\infty$.
 As it can be easily checked, this quantity is indeed a pq-metric.
\end{deph}

This construction was ``left as exercise'' in \cite[Introduction, pg.~139-140]{lawvere}, and fully worked out (and generalized) in \cite{categorienormate}. It is an instance of the ``change of enrichment'' construction of enriched category theory.

Note that in general the minimum is not realized.
 
\begin{deph}\label{optcomplete}
 A weighted category $\cat{C}$ is called \emph{optimization-complete} if for every two objects $X$ and $Y$ of $\cat{C}$, there exists a morphism $X\to Y$ of minimal weight.
 That is, the infimum in \eqref{infmetric} is attained. 
\end{deph}

As functors between weighted categories it is useful to take those functors which are ``weight-nonincreasing'' (or ``1-Lipschitz''):

\begin{deph}\label{defwfun}
 Let $\cat{C}$ and $\cat{D}$ be weighted categories. A \emph{weighted functor} $F:\cat{C}\to\cat{D}$ is a functor such that for each morphism $f:X\to Y$ of $\cat{C}$ 
 $$
 w(Ff) \le w(f) .
 $$
 We denote by $\cat{WCat}$ the category of (small) weighted categories and weighted functors.
\end{deph}

If $\cat{C}$ and $\cat{D}$ are ordinary categories, and we consider them as weighted categories where all the weights are zero, a weighted functor $\cat{C}\to\cat{D}$ is the same thing as an ordinary functor. Categorically, this gives a fully faithful functor $\cat{Cat}\to\cat{WCat}$.

If $X$ and $Y$ are pq-metric spaces and we consider them as weighted categories, a function $f:X\to Y$ is a weighted functor if and only if it is 1-Lipschitz. (Since arrows are unique, identities and composition are trivially preserved.) Calling $\cat{pqMet}$ the category of pq-metric spaces and 1-Lipschitz maps, we then have a fully faithful functor $\cat{pqMet}\to\cat{WCat}$.
Somewhat conversely, any weighted functor between weighted categories gives a 1-Lipschitz function between the pq-metric spaces obtained via optimization. Therefore, $\Opt$ is a functor $\cat{WCat}\to\cat{pqMet}$.

The following definition can be seen as the many-arrow analogue of the symmetry of a metric, or as a weighted analogue of dagger categories.
Similarly to the unweighted case, denote by $\cat{C}^\op$ the category obtained by reversing all arrows of $\cat{C}$, with the same weights.

\begin{deph}\label{defsym}
 A \emph{symmetric weighted category} is a weighted category $\cat{C}$ equipped with a weighted functor $\dagger:\cat{C}^\op\to\cat{C}$, called the \emph{symmetry} or \emph{dagger}, such that 
 \begin{itemize}
  \item for each object $X$, $X^\dagger=X$ (i.e.~$\dagger$ is the identity on the objects);
  \item for each morphism $f$, we have that $f^{\dagger\dagger}=f$ (i.e.~$\dagger$ is an involution, $\dagger^2=\id$).
 \end{itemize}
\end{deph}

In other words, in a symmetric weighted category to each morphism $f:X\to Y$ there is a morphism in the opposite direction, $f^\dagger:Y\to X$, necessarily of the same weight as $f$. (And moreover, this correspondence preserves identities and composition).

For example, if we take a pq-metric space and consider it as a weighted category as in \Cref{deflms}, it is symmetric as a weighted category if and only if the cost function is symmetric (for example, if it is a metric). 
Similarly, if $\cat{C}$ is a symmetric weighted category, then the cost function of the optimization $\Opt(\cat{C})$ will be symmetric. 

\subsection{Isomorphisms}

Of particular interest is the notion of isomorphism in a weighted category, which is again bringing together category-theoretic and geometric concepts.

\begin{deph}\label{defiso}
 Let $X$ and $Y$ be objects in a weighted category $\cat{C}$. A \emph{quasi-isomorphism} between $X$ and $Y$ is a pair of morphisms $f:X\to Y$ and $g:Y\to X$ of finite weight, such that $g\circ f=\id_X$ and $f\circ g=\id_Y$.
 
 An \emph{isomorphism} between $X$ and $Y$ is a quasi-isomorphism pair $(f,g)$ such that both $f$ and $g$ have weight zero.
 
 We also write $g=f^{-1}$, $f=g^{-1}$, and sometimes we denote the pair $(f,g)$ simply by $f$. If an isomorphism (resp.~quasi-isomorphism) between $X$ and $Y$ exists, we say that $X$ and $Y$ are \emph{isomorphic} (resp.~\emph{quasi-isomorphic}), and we write $X\cong Y$ (resp.~$X\simeq Y$).
\end{deph}

Our term ``quasi-isomorphism'' is unrelated to the notion in homological algebra with the same name. 

\begin{remark}
Just as for the definition of weighted categories, different authors refer to weighted isomorphisms in different ways. 
In general, only invertible morphisms \emph{of weight zero} tend to behave like ``full'' isomorphisms. (See the next two examples.) 
This intuition can be made mathematically precise: considering weighted categories as enriched categories, the invertible morphisms of weight zero are exactly the isomorphisms in the sense of enriched category theory (as defined for example in~\cite{basicconcepts}). 
In \cite{schroderbernstein} these morphisms are called \emph{norm isomorphisms}, in \cite{categorieswithnorms} they are called \emph{$0$-isomorphisms}.
\end{remark}

\begin{eg}
If $X$ is a pq-metric space, considered as a weighted category, the points $x,y\in X$ are isomorphic if and only if $c(x,y)=0$ and $c(y,x)=0$. In particular, in a metric space, $x\cong y$ of and only if $x=y$. The points $x$ and $y$ are quasi-isomorphic if and only if they have finite distance in both directions.
\end{eg}

\begin{eg}
The isomorphisms in the category $\cat{Lip}$ of \Cref{lipschitzcat} are precisely the isometries. 
The quasi-isomorphisms are precisely the bi-Lipschitz maps. 
\end{eg}

\begin{remark}\label{isoopt}
Given isomorphic objects $X$ and $Y$ in a weighted category $\cat{C}$, they will also be isomorphic in $\Opt(\cat{C})$, i.e.~have distance zero in both directions. 
Instead, if $X$ and $Y$ are isomorphic in $\Opt(\cat{C})$, they are not necessarily isomorphic in $\cat{C}$: all we can say is that for every $\e > 0$, there are morphisms $f_\e:X\to Y$ and $g_\e:Y\to X$ of weight less than $\e$. If $\cat{C}$ is optimization-complete, then there are also morphisms $f:X\to Y$ and $g:Y\to X$ of weight zero realizing the two minima. However, this still does not guarantee that $f$ and $g$ are inverses. 
Similar consideration can be given for quasi-isomorphisms.

The authors of \cite{schroderbernstein} define a \emph{normed category} as a weighted category $\cat{C}$ where, in our terminology, for every two objects $X$ and $Y$ we have that $X\cong Y$ in $\cat{C}$ if and only if $X\cong Y$ in $\Opt(\cat{C})$. 
(Recall however that the term ``normed category'' is used differently by different authors, see \Cref{terminology}.)
\end{remark}

\subsection{Embeddings}

Embeddings of weighted categories are the common generalization of isometric embeddings of metric spaces and of fully faithful functors. 

\begin{deph}
 Let $\cat{C}$ and $\cat{D}$ be weighted categories. An \emph{embedding} of $\cat{C}$ into $\cat{D}$ is a weighted functor $F:\cat{C}\to\cat{D}$ such that for every pair of objects $X$ and $Y$ of $\cat{C}$, the function 
 $$
 \begin{tikzcd}[row sep=0]
  \Hom_\cat{C}(X,Y) \ar{r} & \Hom_\cat{D}(FX,FY) \\
  f \ar[mapsto]{r} & Ff
 \end{tikzcd}
 $$
 is a weight-preserving bijection.
\end{deph}

\begin{eg}
 If $X$ and $Y$ are metric spaces (or even pq-metric spaces), considered as weighted categories, a map $f:X\to Y$ is an embedding in the sense above exactly if it is an isometric embedding.
 Somewhat conversely, an embedding of weighted categories $F:\cat{C}\to\cat{D}$ induces an isometric embedding of the pq-metric spaces $\Opt(\cat{C})\to\Opt(\cat{D})$. 
\end{eg}

In ordinary category theory, fully faithful functors are injective up to isomorphism, and in metric geometry, isometric embeddings are injective up to zero distances. 
Both these statements can be considered a special case of the following.

\begin{prop}
 Let $F:\cat{C}\to\cat{D}$ be an embedding of weighted categories, and let $X$ and $Y$ be objects of $\cat{C}$. If $FX$ and $FY$ are isomorphic (resp.~quasi-isomorphic) in $\cat{D}$, then $X$ and $Y$ are already isomorphic (resp.~quasi-isomorphic) in $\cat{C}$.
\end{prop}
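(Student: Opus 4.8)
The plan is to exploit the two defining features of an embedding separately: surjectivity of $F$ on each hom-set lets me lift the witnessing isomorphism from $\cat{D}$ back to $\cat{C}$, weight-preservation transfers the numerical condition on the weights, and injectivity on hom-sets lets me transport the identity equations backward. I would treat the isomorphism case in full and then observe that the quasi-isomorphism case is identical with ``weight zero'' replaced by ``finite weight''.

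First I would unpack the hypothesis: assuming $FX\cong FY$ in $\cat{D}$, there is a pair $\phi:FX\to FY$ and $\psi:FY\to FX$, both of weight zero, with $\psi\circ\phi=\id_{FX}$ and $\phi\circ\psi=\id_{FY}$. Since $F$ is an embedding, the maps $\Hom_\cat{C}(X,Y)\to\Hom_\cat{D}(FX,FY)$ and $\Hom_\cat{C}(Y,X)\to\Hom_\cat{D}(FY,FX)$ are weight-preserving bijections. By surjectivity I obtain (unique) morphisms $f:X\to Y$ and $g:Y\to X$ with $Ff=\phi$ and $Fg=\psi$, and by weight-preservation $w(f)=w(\phi)=0$ and $w(g)=w(\psi)=0$.

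The crux is then verifying $g\circ f=\id_X$ and $f\circ g=\id_Y$. Because $F$ is a functor it preserves composition and identities, so $F(g\circ f)=Fg\circ Ff=\psi\circ\phi=\id_{FX}=F(\id_X)$. The key ingredient is that $F$ is \emph{injective} on $\Hom_\cat{C}(X,X)$ (being a bijection there), so $F(g\circ f)=F(\id_X)$ forces $g\circ f=\id_X$; the symmetric argument on $\Hom_\cat{C}(Y,Y)$ gives $f\circ g=\id_Y$. Hence $(f,g)$ is an isomorphism between $X$ and $Y$ in $\cat{C}$. For the quasi-isomorphism statement, the same steps apply verbatim: $\phi$ and $\psi$ now have merely finite weight, weight-preservation yields $w(f),w(g)<\infty$, and the two identity equations are recovered exactly as above.

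I expect the only genuinely conceptual step to be isolating the role of injectivity on hom-sets: surjectivity and weight-preservation are what everyone reaches for first, but it is injectivity that upgrades the functorial equation $F(g\circ f)=F(\id_X)$ into the honest equation $g\circ f=\id_X$ in $\cat{C}$. Everything else is routine unwinding of the definitions, so I would keep those verifications brief.
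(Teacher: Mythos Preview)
Your proof is correct and follows essentially the same approach as the paper's own proof: lift the inverse pair via surjectivity on hom-sets, transfer the weight condition via weight-preservation, and use injectivity on the endomorphism hom-sets to recover $g\circ f=\id_X$ and $f\circ g=\id_Y$. The paper's write-up is slightly terser but structurally identical, and it likewise dispatches the quasi-isomorphism case by remarking that the argument works analogously.
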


\begin{proof}
 Let $f:FX\to FY$ be an isomorphism of $\cat{D}$, with inverse $g:FY\to FX$, both of weight zero. Since $F$ is an embedding, it is surjective on morphisms, and so necessarily $f=F\tilde{f}$ and $g=F\tilde{g}$ for some $\tilde{f}:X\to Y$ and $\tilde{g}:Y\to Z$, also of weight zero. Now, 
 $$
 F(\tilde{g}\circ\tilde{f}) = F\tilde{g}\circ F\tilde{f} = g\circ f = \id_{FX} = F(\id_X),
 $$
 but since $F$ is injective on morphisms, we have $\tilde{g}\circ\tilde{f}=\id_X$. Similarly, $\tilde{f}\circ\tilde{g}=\id_Y$.
 Therefore $\tilde{f}$ and $\tilde{g}$ form an isomorphism pair between $X$ and $Y$. 
 
 The quasi-isomorphism case works analogously.
\end{proof}

\section{Lenses and submetries}\label{lenses}

In the setting of weighted categories, we can bring together the concept of \emph{lens} (from category theory) and the concept of \emph{submetry} (from metric geometry).

When it comes to lenses, different authors mean different specific concepts under the same name. 
Here we will use two (related) flavors of lenses: \emph{set-based} lenses and \emph{delta} lenses, and we will provide weighted versions of both.

In our context, we can interpret lenses, as well as submetries, in terms of ``lifting''. 
Indeed, suppose that we have two sets $X$ and $Y$, and a ``projection'' of $X$ onto $Y$. We sketch the situation in the following picture.
 \begin{center}
 \begin{tikzpicture}[%
    baseline=(current  bounding  box.center),
    >=stealth,
    bullet/.style={
       fill=black,
       circle,
       minimum width=1pt,
       inner sep=1pt
     }]
   \draw (0,2) ellipse (2cm and 1cm);
   \node[label=right:$X$] (x) at (1.4,1.2) {};
   
   \draw (0,0) ellipse (2cm and 0.5cm);
   \node[label=right:$Y$] (x) at (1.4,-0.6) {};
   
   \node[bullet,label=right:$x$] (x) at (-1,2) {};
   \node[bullet,label=right:$?$] (x') at (0.8,2) {};
   \node[bullet,label=right:$y$] (y) at (-1,0) {};
   \node[bullet,label=right:$y'$] (y') at (0.8,0) {};
   
   \draw[-,dashed, shorten <=0.5mm, shorten >=0.5mm] (x') -- (y');
   \draw[|->, shorten <=1mm, shorten >=1mm] (x) -- (y);
 \end{tikzpicture}
 \end{center} 
 Given a point $x$ of $X$, as in the picture above, we can project it down to a point $y$ of $Y$. If we now move from $y$ to a different point $y'$ in $Y$, can we lift $y'$ back to a point of $X$ which ``corresponds to $x$ in a canonical way''? 
 Here, ``canonical'' may mean different things in different contexts. For metric spaces we may want, for example, that $d(x,x')=d(y,y')$. In other situations, we may want a form of parallel transport instead.
 Lenses and submetries make this intuition precise in these different settings.
 In particular, \emph{set-based lenses} can be seen as lifting \emph{points}, while \emph{delta lenses} can be seen as lifting \emph{paths}, in a way that preserves identities and composition.

\subsection{Lifting points}
 
Here we use the notion of (set-based) lens first defined in \cite{lens-first}, and analyzed in detail for example in \cite{setbased} (where they are called ``state-based lenses'').
 
\begin{deph}
 A \emph{set-based lens} consists of 
\begin{itemize}
 \item two sets $X$ and $Y$;
 \item a function $f:X\to Y$;
 \item a function $\varphi:X\times Y \to X$,
\end{itemize}
such that the following equations are satisfied for every $x\in X$ and $y\in Y$,
\begin{equation}\label{lenslawseq}
 f\big( \varphi(x,y) \big) = y 
 \qquad\qquad
 \varphi\big( x, f(x) \big) = x 
 \qquad\qquad
 \varphi\big( \varphi(x,y), y' \big) = \varphi(x, y') 
\end{equation}
or equivalently, such that the following diagrams commute,
\begin{equation}\label{lenslawsdiag}
\begin{tikzcd}
 X\times Y \ar{r}{\varphi} \ar{dr}[swap]{\pi_2} & X \ar{d}{f} \\
 & Y
\end{tikzcd}
\qquad
\begin{tikzcd}
 X \ar{r}{(\id,f)} \ar{dr}[swap]{\id} & X\times Y \ar{d}{\varphi} \\
 & X
\end{tikzcd}
\qquad
\begin{tikzcd}
 X\times Y\times Y \ar{r}{\varphi\times\id} \ar{d}{\pi_{1,3}} & X\times Y \ar{d}{\varphi} \\
 X\times Y \ar{r}{\varphi} & X
\end{tikzcd}
\end{equation}
where the maps $\pi$ are the product projections onto the respective factors.
\end{deph}

We will refer to the conditions \eqref{lenslawseq} (or equivalently, \eqref{lenslawsdiag}) as the \emph{lens laws}. In particular we call them, in order, the \emph{lifting law}, the \emph{identity law}, and the \emph{composition law}. We can interpret the maps and the laws as follows.
\begin{itemize}
 \item The map $f$ is a projection from $X$ to $Y$.
 \item The map $\varphi$ takes a point $x\in X$ and a point $y\in Y$, and gives a lifting of $y$ back to a point $\varphi(x,y)\in X$ which in some sense ``corresponds to $x$, but above $y$''.
 \item The lifting law says that indeed, the chosen lifting $\varphi(x,y)$ is in the preimage $f^{-1}(y)$, i.e.~``lies above $y$''.
 \item The identity law says that the point over $f(x)$ corresponding to $x$ is $x$ itself.
 \item The composition law says that the procedure can be iterated, and the resulting lift does not change.
\end{itemize}

Lenses can be composed. Indeed, if $(f,\varphi)$ is a lens between $X$ and $Y$ and $(g,\psi)$ is a lens between $Y$ and $Z$, the composite lens between $X$ and $Z$ is given by 
$$
\begin{tikzcd}
 X \ar{r}{f} & Y \ar{r}{g} & Z
\end{tikzcd}
$$
and
$$
\begin{tikzcd}[sep=large]
 X\times Z \ar{r}{(\id,f)\times\id} & X\times Y\times Z \ar{r}{\id\times\psi} & X\times Y \ar{r}{\varphi} & X .
\end{tikzcd}
$$
This can be interpreted in terms of sequential liftings: given $x\in X$ and $z\in Z$, we first lift $z$ to $Y$ using $\psi$, and then lift the result to $X$ using $\varphi$.

\begin{remark}\label{prodproj}
 Set-based lenses can always be turned into product projections, making the set $X$ isomorphic to a product $Y\times Z$ \cite[Example~6.74]{poly-book}.
 To see it, take for example $Z$ to be the set of sections $s:Y\to X$ of $f$ (such that $f\circ s= \id_Y$) with the ``rigidity'' property that $\varphi(s(y),y')=s(y')$ for all $y,y'\in Y$.
 This way, the map $f:X\to Y$ corresponds to the product projection
 \begin{equation}\label{fprod}
 \begin{tikzcd}[row sep=0] 
  Y\times Z \ar{r}{f} & Y \\
  (y,z) \ar[mapsto]{r} & y
 \end{tikzcd}
 \end{equation}
 and the map $\phi$ is then the canonical lifting
 \begin{equation}\label{phiprod}
 \begin{tikzcd}[row sep=0]
  (Y\times Z)\times Y \ar{r}{\varphi} & Y\times Z \\
  \big((y,z),y'\big) \ar[mapsto]{r} & (y',z) .
 \end{tikzcd}
 \end{equation}
 One can interpret this as ``choosing the point in $Y\times Z$ which lies over $y'$, but which is at the same height as $(y,z)$''. Geometrically, $\varphi$ can be seen as a choice of the ``horizontal'' direction in the product $Y\times Z$, analogous to a choice of connection on a (trivial) bundle.
\end{remark}

The definition of set-based lens can be given in any category with products~\cite{algebras-update-strategies}. For example, we are interested in \emph{measurable} lenses.

\begin{deph}
 A \emph{measurable lens} consists of a lens $(f,\varphi)$ between measurable spaces $X$ and $Y$, such that the functions $f$ and $\varphi$ are measurable functions.
\end{deph}

Given a measurable lens $(f,\varphi)$ between measurable spaces $X$ and $Y$, by \Cref{prodproj} it is still true that on the underlying sets it can be seen as a product projection $Y\times Z\to Y$. However, in principle, the $\sigma$-algebra on $X$ might be different than the product $\sigma$-algebra of $Y\times Z$.
The question of whether measurable lenses can be always turned into product projections in the category of measurable spaces is still open, and we leave it to future work. 
(See however \Cref{prodprojmet}.)

Let's now move to the metric setting, and let's recall the notion of submetry. Traditionally, it is given as follows~\cite{submetries,weaksubmetries}.\footnote{In the Riemannian geometry literature, sometimes people use a definition similar to this one, but where the condition holds only for $r\le R$ for some fixed bound $R$. We will not use such notion here.}
\begin{deph}\label{defsubmetryalt}
 Let $X$ and $Y$ be metric spaces. Denote by $B_X(x,r)$ the closed ball in $X$ of center $x$ and radius $r$, and define $B_Y$ analogously. 
 A map $f:X\to Y$ is a \emph{submetry} if for every $x\in X$ and for every $r>0$ we have that 
 $$
 f \big( B_X(x,r) \big) = B_Y \big( f(x), r \big) .
 $$
\end{deph}
As it can be checked, this definition can be rephrased in the following equivalent way, which is closer to our idea of ``lifting''.
\begin{deph}[alternative]\label{defsubmetry}
 Let $X$ and $Y$ be metric spaces. A function $f:X\to Y$ is a \emph{submetry} if it is 1-Lipschitz, and for every $x\in X$ and $y'\in Y$ there exists $x'\in X$ such that 
 $$
 f(x')=y' \qquad\mbox{and}\qquad  d(x,x') = d\big(f(x), y'\big) .
 $$
\end{deph}

This definition allows us to draw a link between submetries and lenses.\footnote{Callum Reader, in personal communication, pointed out to me that there is an additional, very deep connection between submetries and lenses. Indeed, one can define \emph{enriched} lenses between enriched categories~\cite{enriched-lenses}, and if one instantiates the definition in the case of Lawvere metric spaces, one obtains exactly \emph{weak submetries}, which are defined by replacing closed balls with open balls in \Cref{defsubmetryalt}, see~\cite[Section~1]{weaksubmetries}.}

\begin{deph}\label{metriclens}
 A \emph{metric lens} is a lens $(X,Y,f,\varphi)$ where $X$ and $Y$ are metric spaces, $f$ is 1-Lipschitz, and $f$ and $\varphi$ satisfy the requirement that 
\begin{equation}\label{conddist}
d\big( x, \varphi(x,y) \big) = d\big( f(x), y \big) .
\end{equation}
\end{deph}

Note that this is not the internal notion of lens in a category of metric spaces: $\varphi$ is not required to be 1-Lipschitz, or even continuous. (It is rather a special case of weighted lens, more on that later.)

Every metric lens is canonically a submetry. Indeed, let $(X,Y,f,\varphi)$ be a metric lens, and let $x\in X$ and $y'\in Y$. A point $x'\in X$ such that 
 $$ 
  f(x')=y' \qquad\mbox{and}\qquad  d(x,x') = d\big(f(x), y'\big) .
 $$
is simply given by $x'\coloneqq \varphi(x,y')$. 
Here is a partial converse to this fact. 

\begin{prop}\label{partialconverse}
 Let $f:X\to Y$ be a submetry. For every $x\in X$ and $y'\in Y$, \emph{choose} an $x'\in X$ such that 
 $$
 f(x')=y' \qquad\mbox{and}\qquad  d(x,x') = d\big(f(x), y'\big) .
 $$
 Denote this chosen $x'$ by $\varphi(x,y')$, so that we have a function $\varphi:X\times Y\to Y$. 
 
 Then the pair $(f,\varphi)$ satisfies the lifting and identity axioms of lenses \eqref{lenslawseq}. 
\end{prop}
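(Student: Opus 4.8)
The plan is to verify the two required axioms directly from the defining property of the choice $\varphi$; both should fall out almost immediately, so the substance lies in noticing why the genuine metric structure (rather than a mere pq-metric) is what makes the identity law work.

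First I would dispatch the lifting law $f\big(\varphi(x,y')\big) = y'$. This holds by construction: we selected $\varphi(x,y')$ to be some $x' \in X$ with $f(x') = y'$, so the equation is satisfied with no further argument.

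Next, for the identity law $\varphi\big(x, f(x)\big) = x$, I would unfold the definition. By construction $\varphi(x, f(x))$ is a point $x'$ satisfying $f(x') = f(x)$ together with $d(x, x') = d\big(f(x), f(x)\big)$. Since $d$ is a metric, $d\big(f(x), f(x)\big) = 0$, whence $d(x, x') = 0$. Here I would invoke the separation axiom, valid because $X$ is an honest metric space: $d(x, x') = 0$ forces $x' = x$. Hence every admissible choice of lift must equal $x$, i.e.\ $\varphi(x, f(x)) = x$.

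The only delicate point will be this appeal to the separation axiom; it is also the reason the hypothesis demands metric spaces rather than pq-metric spaces, since in the latter $d(x,x')=0$ need not imply $x = x'$ and the identity law could fail. I would close by remarking that the \emph{composition} law is deliberately omitted from the statement: because $\varphi$ is built from an unconstrained choice of lifting point, the iterated lifts $\varphi\big(\varphi(x,y),y'\big)$ and $\varphi(x,y')$ have no reason to coincide, which is precisely why this converse is only partial.
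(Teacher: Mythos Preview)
Your proof is correct and follows essentially the same route as the paper's own argument: the lifting law is immediate from the choice of $\varphi$, and the identity law follows from $d\big(x,\varphi(x,f(x))\big)=d\big(f(x),f(x)\big)=0$ together with the separation axiom of a genuine metric. Your added remarks on why pq-metrics would not suffice and why composition is omitted are accurate and helpful, though the paper itself leaves them implicit.
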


\begin{proof}
 The lifting law says that 
 $$
 f\big( \varphi(x,y) \big) = y,
 $$
 which is just the condition that $f(x')=y'$.
  
 The identity law says that 
 $$
 \varphi\big( x, f(x) \big) = x.
 $$
 To prove it, note that by the condition on distances,
 $$
 d \big( x, \varphi\big( x, f(x) \big) \big) = d \big( f(x), f(x) ) = 0,
 $$
 therefore $x= \varphi\big( x, f(x) \big)$.
\end{proof}

Therefore, one can view a metric lens as ``a submetry with a choice of liftings, compatible with composition''. 

One can generalize the notion of metric lens to arbitrary pq-metric spaces. (Note that for pq-metric spaces a ``submetry'' only satisfies the identity law up to distance zero, i.e.~$\varphi\big( x, f(x) \big)$ and $x$ are not necessarily the same point, but have distance zero.) 

\begin{remark}\label{prodprojmet}
 Differently from the set case, metric lenses are not necessarily just product projections. Once again, they are product projections on the underlying sets, but the set $X\cong Y\times Z$ may be equipped with a different metric than the product one. In particular, we get a metric lens as long as we equip $Y\times Z$ with a metric such that for every $z\in Z$,
 $$
 d\big( (y,z), (y',z) \big) = d(y,y') .
 $$
 Moreover, the map $\varphi$ is 1-Lipschitz whenever the metric is larger or equal than the product metric, i.e.
 $$
 d\big( (y,z), (y',z') \big) \ge \max\{ d(y,y'), d(z,z') \} .
 $$
 These two conditions are very natural conditions for a metric on $Y\times Z$, and most metrics on a product used in the literature do satisfy these requirements.
\end{remark}

\subsection{Lifting paths}

We now turn to categories and weighted categories. This time we don't just want to lift points, but also paths, or arrows. 
We will consider a weighted analogue of the structure called \emph{delta lens}, first defined in \cite{delta-lens-first}. 
(See also \cite{bryce} for the internal version of delta lenses.)

We recall the original definition, and then present the weighted version.

\begin{deph}\label{lifting}
 Let $\cat{E}$ and $\cat{B}$ be categories. Let $F:\cat{E}\to\cat{B}$ be a functor, let $b:B\to B'$ be a morphism of $\cat{B}$, and
 let $E$ be an object of $\cat{E}$ with $F(E)=B$. 
 A \emph{lifting of $b$ at $E$} is an object $E'$ of $\cat{E}$ such that $F(E')=B'$, together with an arrow $\Phi(E,b):E\to E'$ of $\cat{E}$ such that $F\big(\Phi(E,b)\big)=b$.
\end{deph}

 \begin{center}
 \begin{tikzpicture}[%
    baseline=(current  bounding  box.center),
    >=stealth,
    bullet/.style={
       fill=black,
       circle,
       minimum width=1pt,
       inner sep=1pt
     }]
   \draw (0,2) ellipse (2cm and 1cm);
   \node[label=right:$\cat{E}$] (E) at (1.4,1.2) {};
   
   \draw (0,0) ellipse (2cm and 0.5cm);
   \node[label=right:$\cat{B}$] (B) at (1.4,-0.6) {};
   
   \node[bullet,label=left:$E$] (x) at (-0.8,2) {};
   \node[bullet,label=right:$E'$] (x') at (0.8,2) {};
   \node[bullet,label=left:$B$] (y) at (-0.8,0) {};
   \node[bullet,label=right:$B'$] (y') at (0.8,0) {};
   
   \draw[-,dashed, shorten <=0.5mm, shorten >=0.5mm] (x') -- (y');
   \draw[|->, shorten <=1mm, shorten >=1mm] (x) -- (y);
   \draw[->, shorten <=0.5mm, shorten >=0.5mm] (x) -- (x');
   \draw[->, shorten <=1mm, shorten >=1mm] (y) -- (y');
   
   \node[label=below:$b$] (bm) at (0,0.2) {};
   \node[label=above:$\Phi{(E,b)}$] (phi) at (0,1.8) {};
 \end{tikzpicture}
 \end{center} 

\begin{deph}\label{lens}
 Let $\cat{E}$ and $\cat{B}$ be categories. 
 A \emph{delta lens} or \emph{categorical lens} from $\cat{E}$ to $\cat{B}$ consists of the following data.
 \begin{itemize}
  \item A functor $F:\cat{E}\to\cat{B}$.
  \item For each morphism $b:B\to B'$ of $\cat{B}$ and each object $E$ of $\cat{E}$ with $F(E)=B$, a chosen lifting $\Phi(E,b):E\to E'$ of $b$ satisfying the following conditions.
  \begin{itemize}
  \item The identities are lifted to identities, i.e.~$\Phi(E,\id_B)=\id_{E}$.
  \item The choice of liftings preserves composition. That is, given $b:B\to B'$ and $b':B'\to B''$, consider the chosen lifting $\Phi(E,b):E\to E'$ of $b$, and then, at its codomain $E'$ consider the lifting $\Phi(E',b')$ of $b'$. We require that $\Phi(E',b')\circ \Phi(E,b)=\Phi(E,b'\circ b)$.
  \end{itemize}
 \end{itemize}
\end{deph}

A famous particular case of lens is given by \emph{split Grothendieck opfibrations}, where liftings are moreover \emph{initial} in some sense. 
A somewhat trivial case is given by set-based lenses, which are equivalently delta lenses between \emph{codiscrete} categories (a unique arrow between any two objects).
See for example \cite{bryce} for more on this.

Here is the weighted analogue of liftings and lenses.\footnote{Once again, this is an instance of \emph{enriched lens} as given in \cite{enriched-lenses}, for the case of weighted categories.}
\begin{deph}
 Let $\cat{E}$ and $\cat{B}$ be weighted categories. Let $F:\cat{E}\to\cat{B}$ be a weighted functor, let $b:B\to B'$ be a morphism of $\cat{B}$, and
 let $E$ be an object of $\cat{E}$ with $F(E)=B$. 
 A \emph{weight-preserving lifting of $b$ at $E$} is a lifting $\Phi(E,b)$ of $b$ such that $w\big(\Phi(E,b)\big) = w(b)$.
 
A \emph{weighted lens} is defined similarly to a delta lens, but where all the liftings are required to be weight-preserving.
\end{deph}

Consider now pq-metric spaces $X$ and $Y$ as weighted categories as in \Cref{deflms}. A weighted lens between them consists of  
\begin{itemize}
 \item a 1-Lipschitz map $f:X\to Y$;
 \item for every $x\in X$ and $y'\in Y$, a chosen lifting of $y'$ to a point $x'\in f^{-1}(y')$ such that $d(f(x),y')=d(x,x')$. 
\end{itemize}
This is, in other words, a metric lens in the sense of \Cref{metriclens}.

Also delta lenses can be composed, analogously to set-based lenses. For the details, see \cite{bryce}. 
For convenience, let's define the following categories.
\begin{itemize}
 \item $\cat{SetLens}$ is the category of sets and set-based lenses.
 \item $\cat{pqMetLens}$ is the category of pq-metric spaces and metric lenses.
 \item $\cat{CatLens}$ is the category of (small) categories and delta lenses.
 \item $\cat{WCatLens}$ is the category of (small) weighted categories and weighted lenses. 
\end{itemize}
Between these categories we have the following fully faithful functors.
\begin{itemize}
 \item By considering two sets as a codiscrete categories, set-based lenses between them are the same as delta lenses. This gives a fully faithful functor $\cat{SetLens}\to\cat{CatLens}$,
 \item Analogously, by considering pq-metric spaces as weighted categories with a single arrow between any two objects, we get a fully faithful functor $\cat{pqMetLens}\to\cat{WCatLens}$. 
 \item Considering ordinary categories as having weight zero for all morphisms gives a fully faithful functor $\cat{CatLens}\to\cat{WCatLens}$. 
 \item One can construct a similar functor $\cat{SetLens}\to\cat{pqMetLens}$, making the following diagram commute.
 $$
\begin{tikzcd}
 \cat{SetLens} \ar{d} \ar{r} & \cat{pqMetLens} \ar{d} \\
 \cat{CatLens} \ar{r} & \cat{WCatLens}
\end{tikzcd}
$$
\end{itemize}

\section{The category of couplings}\label{catcouplings}

Here we briefly review some basic notions of optimal transport theory, mostly to fix the notation. 
For most of the constructions in this section we require the measurable spaces to be standard Borel, also known as ``Polish'': those measurable spaces whose $\sigma$-algebra can be obtained as the Borel $\sigma$-algebra of a complete, separable metric space. This is needed in particular to have conditional distributions, more on that later in this section. 
 
For $X$ a (generic) measurable space, denote by $\Sigma(X)$ its $\sigma$-algebra, and by $PX$ the set of probability measures over it. 
If $X$ and $Y$ are measurable spaces, denote by $X\times Y$ their Cartesian product, equipped with the product $\sigma$-algebra. 

\begin{deph}
 Let $X$ and $Y$ be measurable spaces. Let $p\in PX$ and $q\in PY$ be probability measures. A \emph{coupling} of $p$ and $q$ is a probability measure $s\in P(X\times Y)$ such that its marginals on $X$ and $Y$ are $p$ and $q$, respectively.
 
 We denote by $\Gamma(p,q)$ the space of couplings of $p$ and $q$.
\end{deph}

Given a coupling $s$ of $p$ and $q$, and if $A\subseteq X$ and $B\subseteq Y$ are measurable subsets, we will write $s(A\times B)$ as shorthand for the measure of the ``cylinder''
$$
s \Big( \big(\pi_1^{-1}(A)\times Y\big) \intersection \big(X\times \pi_2^{-1}(B)\big) \Big) ,
$$
where $\pi_1:X\times Y\to X$ and $\pi_2:X\times Y\to Y$ are the product projections. 

\begin{deph}
 Let $X$ be a measurable space, and let $c:X\times X\to[0,\infty]$ be a pq-metric. Let $p$ and $q$ be Borel probability measures on $X$, and let $s$ be a coupling of $p$ and $q$. For an integer $k\ge 1$, the \emph{$k$-cost} of the coupling $s$ is the quantity
 $$
 \cost_k(s) \coloneqq \sqrt[k]{ \int_{X\times X} c(x,y)^k \, s(dx\,dy) }.
 $$
\end{deph}

Given a standard Borel space $X$, and fixing $k$, we now construct a category whose objects are probability measures on $X$, and whose morphisms are couplings with the weight given by the $k$-cost.

Given $p\in PX$, the identity coupling $I_p$ is given, intuitively, by ``a copy of $p$ supported on the diagonal''. We can express this rigorously as
$$
I_p(A \times A') \coloneqq p(A\intersection A') .
$$

Therefore, the cost of the identity is zero for every $k$:
$$
\cost_k(I_p)^k = \int_{X\times X} c(x,y)^k\,I_p(dx\,dy) = \int_X c(x,x)^k\,p(dx) = 0.
$$

In order to define composites, we have to use conditional probabilities.

\subsection{Composition}

Let $X$ and $Y$ be measurable spaces. A \emph{Markov kernel} from $X$ to $Y$ is a mapping 
$$
\begin{tikzcd}[row sep=0]
X\times \Sigma(Y) \ar{r}{h} & {[0,1]} \\
(x,B) \ar[mapsto]{r} & h(B|x)
\end{tikzcd}
$$ such that, for each $x\in X$, $h(-|x):\Sigma(Y)\to[0,1]$ is a probability measure, and for each $B\in \Sigma(Y)$, $h(B|-):X\to [0,1]$ is a measurable function.

Markov kernels are often used to denote conditional probabilities, and this works particularly well in standard Borel spaces. We use the following standard ``conditioning'' result, which will underlie most of the constructions in the rest of this work. (For a reference, see for example~\cite[Section~452]{fremlin} and \cite[Section~10.4]{bogachev}.)

\begin{thm}\label{conditionals}
 Let $X$ and $Y$ be standard Borel spaces, let $p$ and $q$ be probability measures on $X$ and $Y$ respectively, and let $s$ be a coupling of $p$ and $q$. There exists a Markov kernel $\vec{s}$ from $X$ to $Y$ such that for every pair measurable subsets $A\subseteq X$ and $B\subseteq Y$,
  $$
  \int_{A} \vec{s}(B|x) \, p(dx) =  s(A\times B) ;
  $$
 Moreover, such a kernel $\vec{s}$ is unique $p$-almost surely.
\end{thm}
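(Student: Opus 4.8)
The plan is to recognize $\vec{s}$ as a regular conditional probability (disintegration) of $s$ along the projection $\pi_1:X\times Y\to X$, and to build it via the Radon--Nikodym theorem followed by a patching argument that exploits the standard Borel structure. First I would fix a measurable $B\subseteq Y$ and consider the set function $A\mapsto s(A\times B)$ on $\Sigma(X)$. This is a finite measure on $X$, and it is absolutely continuous with respect to $p$: if $p(A)=0$ then, since $p$ is the first marginal of $s$, we have $s(A\times Y)=p(A)=0$, whence $0\le s(A\times B)\le s(A\times Y)=0$. By the Radon--Nikodym theorem there is a $p$-almost surely unique density $f_B:X\to[0,1]$ with $s(A\times B)=\int_A f_B(x)\,p(dx)$ for all $A$. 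The naive definition $\vec{s}(B|x)\coloneqq f_B(x)$ already satisfies the required integral identity, but only makes sense for one $B$ at a time.

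The substantive work is to assemble these separately-defined densities into a genuine Markov kernel, i.e.\ a function that is a probability measure in $B$ for each fixed $x$ and measurable in $x$ for each fixed $B$. Here I would use that $Y$, being standard Borel, is isomorphic to a Borel subset of $\R$ (or to a countable space, which is easier), and reduce to the case $Y\subseteq\R$. For each rational $r$ set $F_r\coloneqq f_{Y\cap(-\infty,r]}$, a version of the Radon--Nikodym density of $A\mapsto s\big(A\times(Y\cap(-\infty,r])\big)$. The family $\{F_r\}_{r\in\Q}$ satisfies monotonicity in $r$, the correct limits as $r\to\pm\infty$, and right-continuity along rationals, each of these being an identity of $p$-integrals and hence holding $p$-almost surely; since there are only countably many such constraints, they all hold simultaneously off a single $p$-null set $N$. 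For $x\notin N$ the map $r\mapsto F_r(x)$ is then a bona fide cumulative distribution function, determining a unique Borel probability measure $\vec{s}(-|x)$ on $Y$; for $x\in N$ I would assign an arbitrary fixed probability measure (e.g.\ $q$). Measurability in $x$ holds on the generating intervals by construction and propagates to all Borel sets.

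To finish, I would verify the defining equation $\int_A \vec{s}(B|x)\,p(dx)=s(A\times B)$ for all measurable $B$: it holds for $B$ of the form $Y\cap(-\infty,r]$, hence for the algebra they generate, and then extends to all of $\Sigma(Y)$ by a monotone class (Dynkin $\pi$--$\lambda$) argument, using that both sides are measures in $B$ for fixed $A$ and agree on a generating $\pi$-system. For uniqueness, if $\vec{s}$ and $\vec{s}'$ both satisfy the identity, then for each fixed $B$ both $x\mapsto\vec{s}(B|x)$ and $x\mapsto\vec{s}'(B|x)$ are Radon--Nikodym densities of the same measure $A\mapsto s(A\times B)$, so they coincide $p$-almost surely. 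Applying this to a fixed countable generating $\pi$-system of $\Sigma(Y)$ (available because $Y$ is standard Borel) yields a single $p$-null set outside of which $\vec{s}(-|x)$ and $\vec{s}'(-|x)$ agree on that $\pi$-system, and two probability measures agreeing on a generating $\pi$-system are equal; thus $\vec{s}=\vec{s}'$ $p$-almost surely.

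The main obstacle is precisely the patching step: individually the densities $f_B$ are defined only up to $p$-null sets depending on $B$, and without further structure one cannot glue uncountably many such choices into a measure-valued map. The resolution rests essentially on standard Borelness, which provides a countable generating family (the rational half-lines) so that all the almost-sure monotonicity, limit, and continuity conditions can be imposed on the complement of a \emph{single} null set; this is the point at which the hypothesis cannot be dropped, as regular conditional probabilities need not exist for general measurable spaces.
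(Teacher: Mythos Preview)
Your proof sketch is correct and follows the classical textbook construction of regular conditional probabilities (Radon--Nikodym densities for each $B$, then patching via rational cumulative distribution functions using the standard Borel structure of $Y$). The paper, however, does not prove this statement at all: it is stated as a standard result and simply referenced to Fremlin and Bogachev, with no argument given. So there is nothing to compare at the level of proof strategy; you have supplied a proof where the paper chose to cite one.
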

We call $\vec{s}$ the \emph{conditional distribution} of $Y$ given $X$ (associated to the coupling $s$). 

One can also condition $s$ on $Y$, obtaining a kernel from $Y$ to $X$. We denote this by $\cev{s}(A|y)$. We then have a form of Bayes' theorem, saying that for $A\subseteq X$ and $B\subseteq Y$ measurable,
\begin{equation}\label{bayes}
\int_{A} \vec{s}(B|x) \, p(dx) = s(A\times B) = \int_B \cev{s}(A|y)\, q(dy) .
\end{equation}

A conditional distribution of the identity coupling is given by the indicator function:
$$
\vec{I}_p (A|x) = \cev{I}_p (A|x)= 1_A(x) = \begin{cases}
                            1 & x\in A \\
                            0 & x\notin A .
                           \end{cases}
$$
or equivalently by the map assigning to $x\in X$ the Dirac delta measure $\delta_x$.

Let's now turn to the composition of couplings. This is a standard construction, it is used in optimal transport, under the name ``gluing'' \cite[Chapter~1]{villani}, as well as in statistics, where it's related to the ``conditional product'' construction \cite{cprod}, and in categorical probability (see \cite[Example~6.2]{simpson} and the references therein, as well as the more general construction of \cite[Definition~12.8]{markov}).

\begin{deph}
 Let $p$, $q$ and $r$ be probability measures on the standard Borel spaces $X$, $Y$ and $Z$, respectively. Let $s$ be a coupling of $p$ and $q$, and let $t$ be a coupling of $q$ and $r$. 
 The distribution $t\circ s$ on $X\times Z$ is defined by
 $$
 (t\circ s) (A\times C) \coloneqq  \int_{Y} \cev{s}(A|y) \, \vec{t}(C|y) \, q(dy)
 $$
 for all measurable subsets $A\subseteq X$ and $C\subseteq Z$. 
\end{deph}

We have that the marginal of $t\circ s$ on $X$, evaluated on the measurable subset $A\subseteq X$, is 
$$
 (t\circ s) (A\times Z) =  \int_{Y} \cev{s}(A|y) \cdot 1 \, q(dy) = s(A\times Y) = p(A) ,
$$
and the same can be done for $Z$, so that $t\circ s$ is a coupling of $p$ and $r$. 
Moreover, $t\circ s$ does not depend on the choice of the conditional distributions $\cev{s}$ and $\vec{t}$, since they are unique $q$-almost everywhere. 

The composite coupling gives, up to measure zero, the (Chapman-Kolmogorov) composition of conditional distributions. That is, for $p$-almost all $x$, and for all measurable $C\subseteq Z$,
$$
\overrightarrow{t\circ s} (C|x) = \int_Y \vec{t}(C|y)\,\vec{s}(dy|x) ,
$$
where the expression on the right-hand side denotes the Lebesgue integral of the measurable function $ \vec{t}(C|-):Y\to\R$ with measure $\vec{s}(-|x)\in PY$. 

\begin{prop}
 Let $X$ be a standard Borel space, and let $c:X\times X\to[0,\infty]$ be a measurable pq-metric (i.e.~measurable as a function). Let $p,q,r\in PX$, and let $s,t$ be couplings of $p$ and $q$ and of $q$ and $r$, respectively. Then
 $$
 \cost_k(t\circ s) \le \cost_k(s) + \cost_k(t) .
 $$
\end{prop}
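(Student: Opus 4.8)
The plan is to reduce the statement to the Minkowski inequality in $L^k$ by transporting everything onto a single ``gluing'' measure on the triple product $X \times Y \times Z$. First I would build the measure $\gamma$ on $X\times Y\times Z$ that glues $s$ and $t$ along their common marginal $q$, defined on rectangles by
$$
\gamma(A\times B\times C) \coloneqq \int_B \cev{s}(A|y)\,\vec{t}(C|y)\,q(dy),
$$
using the conditionals supplied by \Cref{conditionals}. Intuitively this draws $y$ from $q$ and then draws the $X$- and $Z$-coordinates conditionally independently, from $\cev{s}(\cdot|y)$ and $\vec{t}(\cdot|y)$ respectively. That $\gamma$ extends to a genuine probability measure on the product $\sigma$-algebra is where the standard Borel hypothesis does its work, and this is the step I expect to require the most care.

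Next I would identify its marginals. Setting $C = Z$ recovers $s$ as the $(X,Y)$-marginal of $\gamma$; setting $A = X$ recovers $t$ as the $(Y,Z)$-marginal; and setting $B = Y$ reproduces exactly the defining formula for $t\circ s$, so that $t\circ s$ is the $(X,Z)$-marginal. Consequently, since $c(x,z)$ depends only on the outer two coordinates,
$$
\cost_k(t\circ s)^k = \int_{X\times Z} c(x,z)^k\,(t\circ s)(dx\,dz) = \int_{X\times Y\times Z} c(x,z)^k\,\gamma(dx\,dy\,dz),
$$
and similarly $\cost_k(s)$ and $\cost_k(t)$ may be written as $L^k(\gamma)$-norms of the functions $(x,y,z)\mapsto c(x,y)$ and $(x,y,z)\mapsto c(y,z)$.

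With everything living on $\gamma$, the two remaining steps are purely analytic. The pointwise triangle inequality $c(x,z)\le c(x,y)+c(y,z)$ gives
$$
\cost_k(t\circ s) = \big\| c(x,z) \big\|_{L^k(\gamma)} \le \big\| c(x,y)+c(y,z) \big\|_{L^k(\gamma)},
$$
and then the Minkowski inequality for the $L^k(\gamma)$-norm yields
$$
\big\| c(x,y)+c(y,z) \big\|_{L^k(\gamma)} \le \big\| c(x,y) \big\|_{L^k(\gamma)} + \big\| c(y,z) \big\|_{L^k(\gamma)} = \cost_k(s)+\cost_k(t).
$$
One subtlety to keep in mind throughout is the possibility of infinite costs: the inequalities above remain valid in $[0,\infty]$, with the right-hand side equal to $\infty$ as soon as either $\cost_k(s)$ or $\cost_k(t)$ is, so no separate case analysis is really needed. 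The only genuinely nontrivial point is the construction and marginal bookkeeping of the gluing measure $\gamma$; once that is in place, the cost estimate is a one-line application of Minkowski.
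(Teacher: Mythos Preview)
Your proposal is correct and follows essentially the same route as the paper: both arguments pass to the gluing measure $\gamma(dx\,dy\,dz)=\cev{s}(dx|y)\,\vec{t}(dz|y)\,q(dy)$, apply the pointwise triangle inequality for $c$, and finish with Minkowski's inequality in $L^k(\gamma)$. The only difference is expository---you name $\gamma$ and verify its marginals explicitly, whereas the paper writes the iterated integral directly without isolating $\gamma$ as a separate object.
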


\begin{proof}
Using Minkowski's inequality,
 \begin{align*}
  \cost(t\circ s) &= \sqrt[k]{ \int_{X\times X} c(x,z)^k \, (t\circ s) (dx\,dz) } \\
  &= \sqrt[k]{ \int_{X\times X\times X} c(x,z)^k \, \cev{s}(dx|y) \, \vec{t}(dz|y) \, q(dy) } \\
  &\le \sqrt[k]{ \int_{X\times X\times X} \big (c(x,y) + c(y,z) \big)^k \, \cev{s}(dx|y) \, \vec{t}(dz|y) \, q(dy) } \\
  &\le \sqrt[k]{ \int_{X\times X} c(x,y)^k \, \cev{s}(dx|y) \, q(dy) } + \sqrt[k]{ \int_{X\times X} c(y,z)^k \, \vec{t}(dz|y) \, q(dy) } \\
  &= \cost(s) + \cost(t). \qedhere
 \end{align*}
\end{proof}

Using this fact we can now show that for each $k\ge 1$, couplings and their costs form a weighted category.

\begin{prop}
 Let $X$ be a standard Borel space, and let $c:X\times X\to[0,\infty]$ be a measurable pq-metric. 
 For each $k$, the set of probability measures $PX$ forms a weighted category, where the arrows are couplings with $k$-cost as their weight. 
\end{prop}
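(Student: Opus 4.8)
The weighted-category axioms split into two groups, and the second group is already in hand. The weight of every arrow lies in $[0,\infty]$ by definition of $\cost_k$; the identity coupling $I_p$ has $\cost_k(I_p)=0$ by the computation recorded above; and the triangle inequality $\cost_k(t\circ s)\le \cost_k(s)+\cost_k(t)$ is exactly the preceding proposition. Moreover it was already checked that $t\circ s$ is a coupling of the correct marginals and is independent of the choice of conditionals. Thus the only thing left to establish is that $PX$, with couplings as morphisms and $t\circ s$ as composition, is a genuine \emph{category}: the plan is to verify the unit laws and associativity. This is where the work lies, since composition is defined through conditionals rather than set-theoretically.

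For the unit laws I would compute directly on product rectangles, which suffices because they form a $\pi$-system generating the product $\sigma$-algebra, so agreement on rectangles upgrades to equality of measures. Writing $s$ for a coupling of $p$ and $q$ and using $\cev{I}_p(A|x)=1_A(x)$ from the identity-conditional formula,
$$
(s\circ I_p)(A\times B)=\int_X 1_A(x)\,\vec{s}(B|x)\,p(dx)=\int_A \vec{s}(B|x)\,p(dx)=s(A\times B),
$$
so $s\circ I_p=s$; symmetrically, using $\vec{I}_q(B|y)=1_B(y)$ together with Bayes' theorem gives $\int_B \cev{s}(A|y)\,q(dy)=s(A\times B)$, whence $I_q\circ s=s$.

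For associativity the key observation is that a coupling of $p$ and $q$ is determined by its domain marginal $p$ together with its forward conditional $\vec{s}$, since $s(A\times B)=\int_A \vec{s}(B|x)\,p(dx)$ pins $s$ down on the generating $\pi$-system of rectangles. Given couplings $s,t,u$ along marginals $p,q,r,v$ on $X,Y,Z,W$, both $(u\circ t)\circ s$ and $u\circ(t\circ s)$ are couplings of $p$ and $v$, hence share the domain marginal $p$; so it suffices to show their forward conditionals agree $p$-almost surely. Applying the Chapman--Kolmogorov formula $\overrightarrow{t\circ s}(C|x)=\int_Y \vec{t}(C|y)\,\vec{s}(dy|x)$ twice, I would obtain
$$
\overrightarrow{u\circ(t\circ s)}(D|x)=\int_Z \vec{u}(D|z)\int_Y \vec{t}(dz|y)\,\vec{s}(dy|x)
$$
and
$$
\overrightarrow{(u\circ t)\circ s}(D|x)=\int_Y\int_Z \vec{u}(D|z)\,\vec{t}(dz|y)\,\vec{s}(dy|x),
$$
which coincide by Tonelli's theorem, all integrands being nonnegative. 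Hence both triple composites have the same forward conditional and the same domain marginal, so they are equal.

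The main obstacle is the measure-theoretic bookkeeping around the almost-sure uniqueness of conditionals. One must check that the Chapman--Kolmogorov composite is a bona fide Markov kernel, i.e.\ that $x\mapsto \int\cdots$ is measurable (via the usual approximation by simple functions and monotone convergence), and one must discard coherently the various $p$-null sets on which the disintegration identities may fail so that the application of Tonelli is legitimate. All of this is standard for standard Borel spaces, so I expect no genuine difficulty beyond careful attention to detail.
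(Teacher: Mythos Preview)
Your proof is correct, and the unit-law computation matches the paper's verbatim. For associativity you take a slightly different route than the paper: you compare the \emph{forward} conditionals of the two triple composites via two applications of Chapman--Kolmogorov and conclude equality by Tonelli, having first observed that a coupling is determined by its domain marginal together with a forward conditional. The paper instead computes $((u\circ t)\circ s)(A\times B)$ directly on rectangles using the defining formula (which mixes backward and forward conditionals over the middle marginal), and then passes to the other parenthesization by the Bayes swap $\vec{t}(dz|y)\,q(dy)=\cev{t}(dy|z)\,m(dz)$ rather than by Tonelli. Your approach is arguably more transparent conceptually, since it stays entirely in the ``forward'' direction and makes the semigroup-of-kernels picture explicit; the paper's approach has the minor advantage that it works entirely at the level of measures on rectangles and never needs the auxiliary observation that equal forward conditionals (up to null sets) force equal couplings. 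Either way the substance is the same.
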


 We will denote the weighted category by $P_k(X,c)$, or more briefly $P_kX$ then this causes no ambiguity. For the unweighted case, we will simply denote the category by $PX$.

\begin{proof}
 The only properties that need to be checked are associativity and unitality of the composition. Note that those need to hold \emph{strictly}, not just almost surely (since the morphisms of $PX$ are joint distributions, not conditional). 
 
 For unitality, let $p,q\in PX$, and let $s\in\Gamma(p,q)$. We have that for all measurable sets $A,B\subseteq X$, using formula \eqref{bayes},
 $$
 (s\circ I_p) (A\times B) = \int_X 1_A(x) \,\vec{s}(B|x)  \, p(dx) = \int_A \vec{s}(B|x) \, p(dx) = s(A\times B) ,
 $$
 and similarly for $I_q\circ s$, so that $s\circ I_p=s=I_q\circ s$.
 
 For associativity, let $p,q,m,n\in PX$, and consider composable couplings $s\in\Gamma(p,q)$, $t\in\Gamma(q,m)$, and $u\in\Gamma(m,n)$. 
 Then for all measurable sets $A,B\subseteq X$, again using \eqref{bayes},
 \begin{align*}
 ((u\circ t)\circ s) (A\times B) &= \int_X \cev{s}(A|y) \, \overrightarrow{u\circ t}(B|y) \, q(dy) \\
 &= \int_X \int_X \cev{s}(A|y) \, \vec{u}(B|z) \, \vec{t}(dz|y) \, q(dy)  \\
 &= \int_X \int_X \cev{s}(A|y) \, \vec{u}(B|z) \, \cev{t}(dy|z)\,m(dz)  \\
 &= \int_X \overleftarrow{t\circ s}(A|z) \, \vec{u}(B|z) \, m(dz) \\
 &= (u\circ (t\circ s)) (A\times B) . \qedhere
 \end{align*}
\end{proof}

Optimization on the arrows gives the following pq-metrics,
\begin{equation}\label{Wasserstein}
c_k(p,q) = \inf_{s\in\Gamma(p,q)} \cost_k(s) = \inf_{s\in\Gamma(p,q)} \sqrt[k]{ \int_{X\times X} c(x,y)\,s(dx\,dy) },
\end{equation}
which are the celebrated Wasserstein pq-metrics. Moreover, the following are true.
\begin{itemize}
 \item If the cost function $c$ on $X$ is symmetric, then $P_kX$ is a symmetric weighted category, and hence \eqref{Wasserstein} is a symmetric cost function as well (i.e.~a pseudometric).
 \item It is well known that the infimum in \eqref{Wasserstein} is actually attained in many cases~\cite[Theorem~4.1]{villani}, and in that case $P_kX$ is an optimization-complete weighted category.
  \item If we see $X$ as a weighted category (with unique morphisms between any two points), the map $\delta:X\to PX$ assigning to each point $x$ the Dirac measure $\delta_x$ is an embedding of weighted categories. The reason why we have an embedding is that any two Dirac measures $\delta_x$ and $\delta_y$ admit a unique coupling $\delta_{(x,y)}$. 
 \item The map $\delta$ is natural in $X$, on objects it can be seen as the unit of the Kantorovich monad \cite{breugel,ours_kantorovich}. Moreover, the multiplication of the Kantorovich monad is a nontrivial example of a submetry, as proven in \cite[Appendix~A]{ours_ordered}.
\end{itemize}

\section{Main statements}\label{mains}

Here are some results involving the constructions defined in the previous sections. In \Cref{pushf} we show that pushforwards of probability measures are functorial (\Cref{funpushf}), and that the pushforward of an embedding is an embedding of categories (both for the weighted and unweighted case). Most importantly, in \Cref{lensestolenses} we show that lenses between the underlying spaces induce (weighted) lenses between the respective categories of couplings, where the liftings are formed by suitable conditional distributions, and once again that such an assignment is functorial (\Cref{funlensestolenses}).

\subsection{Pushforward measures}\label{pushf}

Recall that given a measurable function $f:X\to Y$ and a probability measure $p\in PX$, the \emph{pushforward} of $p$ along $f$ is the probability measure $f_{\sharp}p\in PY$ given by
$$
f_{\sharp}p(B) \coloneqq p(f^{-1}(B))
$$
for all measurable subsets $B\subseteq Y$.
The assignment $p\mapsto f_{\sharp}p$ gives then a function $f_{\sharp}:PX\to PY$. Examples of pushforward measures are given by the marginal projections: the marginalization $P(X\times Y)\to PX$ is the pushforward along the product projection $\pi_1:X\times Y\to X$. 

This assignment is functorial, in the sense that if we denote by $\cat{Meas}$ the category of measurable spaces and functions, the pushforward gives a functor $P:\cat{Meas}\to\cat{Set}$ which maps a measurable set $X$ to the set $PX$ and a measurable function $f:X\to Y$ to its pushforward. One can also equip the sets $PX$ themselves with a measurable structure, and obtain an endofunctor on $\cat{Meas}$, the underlying functor of the Giry monad \cite{giry}.
Since in the previous section we made $PX$ a (weighted) category, we would now like to show that the pushforward $f_\sharp:PX\to PY$ is a (weighted) functor.

Before that, let's look at a useful technical result.

\begin{lemma}\label{pfcond}
 Let $f:X\to Y$ be a measurable function between standard Borel spaces. Let $s\in P(X\times X)$ be a coupling of $p$ and $q\in PX$. A conditional distribution for the pushforward coupling $f^2_{\sharp}s\in P(Y\times Y)$ between $f_{\sharp}p$ and $f_{\sharp}q$ satisfies
 $$
 \overrightarrow{f^2_{\sharp}s}\big(B'|f(x)\big) = \vec{s}\big(f^{-1}(B')|x\big)
 $$
 for $p$-almost all $x\in X$ and all measurable $B'\subseteq Y$.
\end{lemma}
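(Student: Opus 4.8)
The plan is to verify the claimed formula through the defining, almost-surely-unique characterization of conditionals in \Cref{conditionals}, applied on $Y$ to the coupling $f^2_\sharp s$. Concretely, I would take the kernel $y\mapsto \overrightarrow{f^2_\sharp s}(B'\,|\,y)$ and compare the two functions on $X$ obtained by precomposing it with $f$ and by evaluating $\vec{s}(f^{-1}(B')\,|\,x)$; since \Cref{conditionals} pins down $\overrightarrow{f^2_\sharp s}$ only through its integrals, it suffices to match the two sides after integrating over preimage sets $f^{-1}(A')$ with $A'\subseteq Y$ measurable.

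The computational core is short. Unwinding the pushforward along $f^2=f\times f$ gives, for measurable $A',B'\subseteq Y$,
$$(f^2_\sharp s)(A'\times B') = s\big(f^{-1}(A')\times f^{-1}(B')\big),$$
and applying the conditional $\vec{s}$ of $s$ (via \Cref{conditionals} on $X$) rewrites the right-hand side as
$$(f^2_\sharp s)(A'\times B') = \int_{f^{-1}(A')} \vec{s}\big(f^{-1}(B')\,\big|\,x\big)\, p(dx).$$
On the other hand, \Cref{conditionals} on $Y$ together with the change-of-variables formula for $f_\sharp p$ yields
$$(f^2_\sharp s)(A'\times B') = \int_{A'} \overrightarrow{f^2_\sharp s}(B'\,|\,y)\,(f_\sharp p)(dy) = \int_{f^{-1}(A')} \overrightarrow{f^2_\sharp s}\big(B'\,|\,f(x)\big)\, p(dx).$$
Comparing the two expressions shows that $x\mapsto \overrightarrow{f^2_\sharp s}(B'\,|\,f(x))$ and $x\mapsto \vec{s}(f^{-1}(B')\,|\,x)$ have equal $p$-integrals over every set of the form $f^{-1}(A')$.

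The main obstacle is the final passage from equality of these integrals to the asserted pointwise equality. The first function is, by construction, measurable with respect to the sub-$\sigma$-algebra generated by $f$, so the displayed identity really only identifies $x\mapsto\overrightarrow{f^2_\sharp s}(B'\,|\,f(x))$ as the \emph{conditional expectation} of $x\mapsto\vec{s}(f^{-1}(B')\,|\,x)$ with respect to that sub-$\sigma$-algebra under $p$. I would therefore isolate, as the crux, the point that $x\mapsto\vec{s}(f^{-1}(B')\,|\,x)$ is already (a.s.\ equal to) a function of $f(x)$ -- i.e.\ that it \emph{factors through} $f$ -- which is what lets one drop the conditional expectation and conclude genuine $p$-a.s.\ equality. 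Once this descent is in hand, the uniqueness clause of \Cref{conditionals} delivers the statement immediately. Finally, because that uniqueness holds only $p$-a.s.\ and is obtained separately for each fixed $B'$, the exceptional null set may depend on $B'$, which is exactly what the ``for all $B'$, for $p$-almost all $x$'' phrasing records.
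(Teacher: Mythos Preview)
Your computational core matches the paper's proof line for line: the paper shows exactly
\[
\int_{f^{-1}(B)} \overrightarrow{f^2_\sharp s}\big(B'\,\big|\,f(x)\big)\, p(dx) \;=\; \int_{f^{-1}(B)} \vec{s}\big(f^{-1}(B')\,\big|\,x\big)\, p(dx)
\]
for all measurable $B,B'\subseteq Y$, and then stops. You go further and correctly observe that equality of these integrals over sets of the form $f^{-1}(B)$ only identifies $\overrightarrow{f^2_\sharp s}(B'\,|\,f(\cdot))$ as the conditional expectation of $\vec{s}(f^{-1}(B')\,|\,\cdot)$ with respect to the sub-$\sigma$-algebra generated by $f$, and that genuine $p$-a.s.\ pointwise equality would follow once one knows that $x\mapsto\vec{s}(f^{-1}(B')\,|\,x)$ is itself (a.s.) measurable for that sub-$\sigma$-algebra, i.e.\ factors through $f$.

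The trouble is that this last claim --- the ``crux'' you propose to isolate --- is false in general, so the gap cannot be filled. Take $X=\{a,b,c,d\}$ with $p$ uniform, $Y=\{0,1\}$, $f(a)=f(b)=0$, $f(c)=f(d)=1$, and let $s$ be the deterministic coupling induced by the transposition $b\leftrightarrow c$, so that $\vec{s}(\cdot\,|\,a)=\delta_a$, $\vec{s}(\cdot\,|\,b)=\delta_c$, $\vec{s}(\cdot\,|\,c)=\delta_b$, $\vec{s}(\cdot\,|\,d)=\delta_d$. For $B'=\{0\}$ one gets $\vec{s}(f^{-1}(B')\,|\,x)=1,0,1,0$ at $x=a,b,c,d$, which is \emph{not} constant on the fibres $\{a,b\}$ and $\{c,d\}$ of $f$; meanwhile $f^2_\sharp s$ is uniform on $Y^2$, so $\overrightarrow{f^2_\sharp s}(\{0\}\,|\,f(x))=\tfrac12$ for every $x$. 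The two sides of the lemma disagree on a set of full $p$-measure. In short, the integral identity you and the paper both derive is the correct content here; the stronger pointwise formula asserted in the lemma does not hold without further hypotheses, and your proposed route to it cannot succeed.
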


\begin{proof}
 For all measurable subsets $B,B'\subseteq Y$, we have that 
 \begin{align*}
  \int_{f^{-1}(B)} \overrightarrow{f^2_{\sharp}s}\big(B'|f(x)\big) \, p(dx) &= \int_{B} \overrightarrow{f^2_{\sharp}s}(B'|y) \, f_{\sharp}p(dy) \\ 
  &= f^2_{\sharp}s(B\times B') \\
  &= s\big(f^{-1}(B)\times f^{-1}(B')\big) \\
  &= \int_{f^{-1}(B)} \vec{s}\big(f^{-1}(B')|x\big) \, p(dx) . \qedhere
 \end{align*}
\end{proof}

Now let's apply this result to the following proposition.

\begin{prop}\label{pushforward}
 Let $X$ and $Y$ be standard Borel spaces. Let $f:X\to Y$ be a measurable function. 
 The pushforward mapping $f_{\sharp}:PX\to PY$ induces a functor between the respective categories of couplings, where the pushforward of a coupling is given by the map 
 $$
 \begin{tikzcd}
  P(X\times X) \ar{r}{(f\times f)_{\sharp}} & P(Y\times Y) ,
 \end{tikzcd}
 $$
 or more briefly, $f^2_{\sharp}:P(X\times X)\to P(Y\times Y)$.
 
 Moreover, if $X$ and $Y$ are equipped with measurable cost functions, and $f$ is 1-Lipschitz, then $f_\sharp$ is a weighted functor $P_kX\to P_kY$ for all $k$.
\end{prop}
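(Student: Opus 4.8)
The plan is to check, in order, that $f^2_\sharp$ is well-defined on morphisms, that it preserves identities, that it preserves composition, and finally the weight inequality. The first two steps are direct computations with marginals and cylinder sets; the composition step is the crux and relies on \Cref{pfcond}; and the weight bound follows from a single change of variables.

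First I would confirm that $f^2_\sharp s$ really is a coupling of $f_\sharp p$ and $f_\sharp q$ whenever $s$ couples $p$ and $q$. This is immediate from functoriality of the pushforward together with the identities $\pi_i\circ(f\times f)=f\circ\pi_i$ for $i=1,2$, which give $(\pi_i)_\sharp f^2_\sharp s=f_\sharp(\pi_i)_\sharp s$; the right-hand sides are $f_\sharp p$ and $f_\sharp q$. For preservation of identities I would compute directly, using $I_p(A\times A')=p(A\cap A')$ and $f^{-1}(B)\cap f^{-1}(B')=f^{-1}(B\cap B')$,
$$
f^2_\sharp I_p(B\times B')=I_p\big(f^{-1}(B)\times f^{-1}(B')\big)=p\big(f^{-1}(B\cap B')\big)=f_\sharp p(B\cap B')=I_{f_\sharp p}(B\times B'),
$$
so that $f^2_\sharp I_p=I_{f_\sharp p}$.

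The composition axiom $f^2_\sharp(t\circ s)=(f^2_\sharp t)\circ(f^2_\sharp s)$ is where the real work is, and I expect it to be the main obstacle, since it forces one to relate the conditionals of a pushforward coupling to the pushforwards of conditionals. For $s\in\Gamma(p,q)$ and $t\in\Gamma(q,r)$, I would unfold the definition of composition in $P(Y^2)$, whose ``middle'' measure is $f_\sharp q$, and then apply the change-of-variables formula $\int_Y\psi\,d(f_\sharp q)=\int_X(\psi\circ f)\,dq$ to rewrite the integral over $Y$ as an integral over $X$:
$$
\big((f^2_\sharp t)\circ(f^2_\sharp s)\big)(B\times B'')=\int_X\overleftarrow{f^2_\sharp s}\big(B\,|\,f(y)\big)\,\overrightarrow{f^2_\sharp t}\big(B''\,|\,f(y)\big)\,q(dy).
$$
Now I invoke \Cref{pfcond} in its forward form for $t$, giving $\overrightarrow{f^2_\sharp t}(B''|f(y))=\vec{t}(f^{-1}(B'')|y)$ for $q$-almost all $y$, and in its backward form for $s$ — obtained by applying the same lemma to the transposed coupling (equivalently, by conditioning on the second factor rather than the first) — giving $\overleftarrow{f^2_\sharp s}(B|f(y))=\cev{s}(f^{-1}(B)|y)$ for $q$-almost all $y$. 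Substituting these almost-sure identities under the $q$-integral and reusing the definition of composition downstairs yields
$$
\int_X\cev{s}\big(f^{-1}(B)\,|\,y\big)\,\vec{t}\big(f^{-1}(B'')\,|\,y\big)\,q(dy)=(t\circ s)\big(f^{-1}(B)\times f^{-1}(B'')\big)=f^2_\sharp(t\circ s)(B\times B''),
$$
which is the claim. The two delicate points are that both orientations of \Cref{pfcond} are needed, and that the almost-sure identifications are harmless precisely because they hold $q$-almost everywhere and are integrated against $q$.

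For the weighted statement, assuming $f$ is 1-Lipschitz (so $c(f(x),f(x'))\le c(x,x')$ for all $x,x'\in X$), I would apply the change-of-variables formula along $f\times f$ directly to the definition of the $k$-cost:
$$
\cost_k(f^2_\sharp s)^k=\int_{Y^2}c(y,y')^k\,(f^2_\sharp s)(dy\,dy')=\int_{X^2}c\big(f(x),f(x')\big)^k\,s(dx\,dx')\le\int_{X^2}c(x,x')^k\,s(dx\,dx')=\cost_k(s)^k.
$$
Taking $k$-th roots gives $\cost_k(f^2_\sharp s)\le\cost_k(s)$, so $f^2_\sharp$ is weight-nonincreasing and hence a weighted functor $P_kX\to P_kY$ in the sense of \Cref{defwfun}. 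This last part is routine once the change of variables is in hand; all the difficulty is concentrated in the composition step above.
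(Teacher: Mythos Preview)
Your proof is correct and follows essentially the same approach as the paper: identities are checked by the same cylinder-set computation, composition is handled via \Cref{pfcond} (in both orientations, as you note), and the weight bound is the same change-of-variables argument. The only cosmetic difference is that you run the composition chain from $(f^2_\sharp t)\circ(f^2_\sharp s)$ down to $f^2_\sharp(t\circ s)$, whereas the paper starts from $f^2_\sharp(t\circ s)$ and works up; and you include the well-definedness check on marginals, which the paper leaves implicit.
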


Explicitly, given $s\in P(X\times X)$ and measurable subsets $B,B'\subseteq Y$ the function $f^2_{\sharp}$ gives,
$$
f^2_{\sharp}s(B\times B') = s\big(f^{-1}(B)\times f^{-1}(B')\big) .
$$

\begin{proof}
 For the identity coupling $I_p$, we have that 
 $$
 f^2_{\sharp}I_p (B\times B') = p \big(f^{-1}(B)\intersection f^{-1}(B')) = p\big( f^{-1}(B\intersection B') \big) = I_{f_{\sharp}p}(B\times B')
 $$
 for all measurable subsets $B,B'\subseteq Y$.
 For composition, let $p,q,r\in PX$, let $s$ be a coupling from $p$ to $q$, and let $t$ be a coupling between $q$ and $r$. By \Cref{pfcond},
 \begin{align*}
  f^2_{\sharp}(t\circ s)(B\times B') &= (t\circ s)\big(f^{-1}(B)\times f^{-1}(B')\big) \\
  &= \int_{X} \cev{s}\big(f^{-1}(B)|x\big) \, \vec{t}\big(f^{-1}(B')|x\big) \, q(dx) \\
  &= \int_{X}  \overleftarrow{f^2_{\sharp}s}\big(B|f(x)\big) \, \overrightarrow{f^2_{\sharp}t}\big(B'|f(x)\big) \, q(dx) \\
  &= \int_Y \overleftarrow{f^2_{\sharp}s}(B|y) \, \overrightarrow{f^2_{\sharp}t}(B'|y) \, f_{\sharp}q(dy) \\
  &= (f^2_{\sharp}t)\circ(f^2_{\sharp}s) (B\times B') ,
 \end{align*}
 again for all measurable subsets $B,B'\subseteq Y$. Therefore $f_{\sharp}:PX\to PY$ is functorial.
 
 Suppose now that $X$ and $Y$ are equipped with cost functions $c_X$ and $c_Y$. The ($k$-th power of the) cost of  $f^2_{\sharp}s\in P(Y\times Y)$ is 
 \begin{align*}
 \cost (f^2_{\sharp}s)^k &= \int_{Y\times Y} c_Y(y,y')^k \, f^2_{\sharp}s(dy\,dy') \\
 &= \int_{X\times X} c_Y\big(f(x),f(x')\big)^k \, s(dx\,dx') \\
 &\le \int_{X\times X} c_X(x,x')^k \, s(dx\,dx') \\
 &= \cost (s)^k . \qedhere
 \end{align*}
\end{proof}

It is easy to see that the assignment $f\mapsto f_\sharp$ also preserves identities and composition.
\begin{cor}\label{funpushf}
 Denote by $\cat{SBS}$ the category of standard Borel spaces and measurable maps. 
 The assignment $X\mapsto PX$ and $f\mapsto f_\sharp$ defines a functor $\cat{SBS}\to\cat{Cat}$.
 
 Just as well, denote by $\cat{SBpqMet}$ the category of standard Borel spaces equipped with measurable cost functions and 1-Lipschitz measurable maps. 
 For each $k$, the assignment $X\mapsto P_kX$ and $f\mapsto f_\sharp$ defines a functor $\cat{SBpqMet}\to\cat{WCat}$.
\end{cor}

This assignment $f\mapsto f_\sharp$ also preserves embeddings, as the next proposition shows.

\begin{prop}\label{embedding}
 Let $X$ and $Y$ be standard Borel spaces, and let $i:X\to Y$ be a measurable embedding (i.e.~an injective function, such that $\Sigma(X)$ can be written as the pullback $\sigma$-algebra of $\Sigma(Y)$ along the inclusion $i$). 
 Then the map $i_\sharp:PX\to PY$ is a full embedding of categories.

 If moreover $X$ and $Y$ are equipped with measurable cost functions and $i$ is an isometric embedding, 
 then $i_{\sharp}:P_kX\to P_kY$ is an embedding of weighted categories for every $k$.
\end{prop}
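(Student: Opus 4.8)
The plan is to verify the two requirements of an embedding separately: that $i_\sharp$ induces a bijection on each hom-set, and (in the metric case) that this bijection is weight-preserving. Functoriality of $i_\sharp$ is already supplied by \Cref{pushforward}, so I would only need to analyze the action on morphisms
$$
\Gamma(p,p') \longrightarrow \Gamma(i_\sharp p,\, i_\sharp p'), \qquad s \longmapsto i^2_\sharp s ,
$$
for fixed objects $p,p'\in PX$.

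For injectivity I would use that $\Sigma(X)$ is the pullback $\sigma$-algebra along $i$: every measurable rectangle $A\times A'\subseteq X^2$ has the form $i^{-1}(B)\times i^{-1}(B')$, and $i^2_\sharp s(B\times B')=s\big(i^{-1}(B)\times i^{-1}(B')\big)$. Since a measure on $X^2$ is determined by its values on rectangles, $i^2_\sharp s = i^2_\sharp s'$ forces $s=s'$. The same rectangle argument shows $i_\sharp$ is injective on objects, which together with the hom-set bijections yields a \emph{full} embedding of categories.

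Surjectivity is the step I expect to be the main obstacle, and the key geometric input is that $X$ and $Y$ are standard Borel: since $i$ is an injective measurable embedding, the Lusin--Souslin theorem guarantees that the image $i(X)$ is a Borel subset of $Y$ and that $i$ restricts to a Borel isomorphism $X\to i(X)$, which I extend to a Borel map $j:Y\to X$ (arbitrarily off $i(X)$) with $j\circ i=\id_X$. Both marginals $i_\sharp p$ and $i_\sharp p'$ are concentrated on $i(X)$, because e.g.~$i_\sharp p(Y\setminus i(X)) = p\big(i^{-1}(Y\setminus i(X))\big) = p(\emptyset)=0$, so any $t\in\Gamma(i_\sharp p, i_\sharp p')$ is concentrated on $i(X)\times i(X)$. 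I would then set $s\coloneqq (j\times j)_\sharp t$ and check, using functoriality of pushforward together with $j\circ i=\id_X$ and the fact that $(i\circ j)\times(i\circ j)$ acts as the identity on the support of $t$, that $s$ has marginals $p$ and $p'$ and satisfies $i^2_\sharp s = t$. This exhibits $t$ as the image of a genuine coupling of $p$ and $p'$, completing the bijection.

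For the weighted statement, \Cref{pushforward} already gives $\cost_k(i^2_\sharp s)\le\cost_k(s)$, since an isometric embedding is in particular $1$-Lipschitz. The reverse inequality is immediate from the change-of-variables computation
$$
\cost_k(i^2_\sharp s)^k = \int_{Y^2} c_Y(y,y')^k \, i^2_\sharp s(dy\,dy') = \int_{X^2} c_Y\big(i(x),i(x')\big)^k \, s(dx\,dx') = \int_{X^2} c_X(x,x')^k \, s(dx\,dx') = \cost_k(s)^k ,
$$
where the last equality uses the isometry condition $c_Y\big(i(x),i(x')\big)=c_X(x,x')$. Hence the hom-set bijection already established is weight-preserving, so $i_\sharp:P_kX\to P_kY$ is an embedding of weighted categories for every $k$.
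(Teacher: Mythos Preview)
Your proof is correct and follows the same architecture as the paper's: injectivity via the pullback $\sigma$-algebra, surjectivity by constructing an explicit preimage coupling, and weight-preservation by the change-of-variables computation using the isometry condition. The only difference is in how surjectivity is phrased: the paper directly sets $r(A\times A')\coloneqq s\big(i(A)\times i(A')\big)$, relying on the fact that forward images under a measurable embedding of standard Borel spaces are measurable, whereas you package the same descriptive-set-theoretic input (Lusin--Souslin) into a Borel left inverse $j$ and define the preimage as $(j\times j)_\sharp t$. Your formulation has the minor advantage of making it automatic that the result is a measure and that $i^2_\sharp s=t$, while the paper's version leaves those verifications implicit; but the two constructions agree on the nose once one observes that $j^{-1}(A)\cap i(X)=i(A)$ and that $s$ is concentrated on $i(X)\times i(X)$.
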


\begin{proof}
 Let $p,q\in PX$. Consider the restriction of $i^2_\sharp:P(X\times X)\to P(Y\times Y)$ to the couplings of $p$ and $q$, call it again $i^2_\sharp:\Gamma(p,q)\to\Gamma(i_\sharp p, i_\sharp q)$. 
 We have to prove that this new mapping is a (weight-preserving) bijection. 
 To prove that it is injective, consider $r,r'\in \Gamma(p,q)$ and suppose that $i^2_\sharp r = i^2_\sharp r'$. Since $i$ is a measurable embedding, for all measurable subsets $A,A'\subseteq X$ we can find subsets $B,B'\subseteq Y$ such that $A=i^{-1}(B)$ and $A'=i^{-1}(B')$. 
 Therefore, for all measurable subsets $A,A'\subseteq X$, 
 $$
 r(A\times A') = r(i^{-1}(B)\times i^{-1}(B')) = i^2_\sharp r(B\times B') = i^2_\sharp r'(B\times B') = r'(A\times A') ,
 $$
 so that $r=r'$. 
 
 To prove surjectivity, let $s$ be a coupling of $i_\sharp p$ and $i_\sharp q$. Since $i$ is a measurable embedding, for each measurable subset $A\subset X$, the (non-inverse) image $i(A)\subseteq Y$ is again measurable. 
 Construct now the coupling $r\in \Gamma(p,q)$ by 
 $$
 r(A\times A') \coloneqq s\big(i(A)\times i(A')\big) ,
 $$
 so that $i^2_\sharp r=s$. Therefore $i^2_\sharp:\Gamma(p,q)\to\Gamma(i_\sharp p, i_\sharp q)$ is a bijection. 
 
 Suppose now that $X$ and $Y$ have cost functions $c_X$ and $c_Y$, and that $i$ is an isometric embedding. Then given $r\in \Gamma(p,q)$, 
 \begin{align*}
 \cost_k (i^2_{\sharp}s)^k &= \int_{Y\times Y} c_Y(y,y')^k \, i^2_{\sharp}r(dy\,dy') \\
 &= \int_{X\times X} c_Y\big(i(x),i(x')\big)^k \, r(dx\,dx') \\
 &= \int_{X\times X} c_X(x,x')^k \, r(dx\,dx') \\
 &= \cost_k(s)^k ,
 \end{align*}
 which means that the bijection is weight-preserving. 
\end{proof}

\subsection{Lenses to lenses}\label{lensestolenses}

We now come to the main result of this work. Namely, we show that measurable lenses give rise to lenses between the categories of couplings, making $X\mapsto PX$ functorial on lenses as well. 
Again, conditional distributions are the key construction.

\begin{thm}\label{main}
 Let $(f,\varphi):X\to Y$ be a measurable lens between standard Borel spaces. There is a categorical lens $(f_\sharp,\tilde\varphi_\sharp)$ between $PX$ and $PY$ where 
 \begin{itemize}
  \item the projection $f_\sharp:PX\to PY$ is the usual pushforward of measures;
  \item the lifting $\tilde\varphi_\sharp:PX \times_{PY} P(Y\times Y)\to P(X\times X)$ takes $p\in PX$ and a coupling $s\in P(Y\times Y)$ whose first marginal has to equal $f_\sharp p$, and returns the coupling $\tilde\varphi_\sharp(p,s)\in P(X\times X)$ given by
  \begin{equation}\label{lenscond}
  \tilde\varphi_\sharp(p,s)(A\times A') \coloneqq \int_A\int_Y 1_{A'}\big(\varphi(x,y)\big) \, \vec{s}\big(dy|f(x)\big) \, p(dx)
  \end{equation}
  for all measurable subsets $A,A'\subseteq X$.
 \end{itemize}
 
 Moreover, if $X$ and $Y$ are equipped with measurable cost functions and $(f,\varphi)$ is a measurable metric lens, $(f_\sharp,\tilde\varphi_\sharp)$ is a weighted lens between $P_kX$ and $P_kY$ for each $k$. 
\end{thm}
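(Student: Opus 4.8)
The plan is to verify, in order, that $(f_\sharp, \tilde\varphi_\sharp)$ satisfies the three lens laws at the categorical level, and then to check the weight-preservation condition in the metric case. The key observation throughout is that the defining formula \eqref{lenscond} is really the pushforward of $s$ along the fibrewise lifting map $\varphi$, weighted by $p$; more precisely, $\tilde\varphi_\sharp(p,s)$ is the law of the pair $(x, \varphi(x,y))$ where $x\sim p$ and $y\sim\vec{s}(-|f(x))$. Before proving the laws I would record the two marginals of $\tilde\varphi_\sharp(p,s)$: setting $A'=X$ in \eqref{lenscond} and using $\varphi(x,y)\in X$ trivially gives first marginal $p$, while setting $A=X$ and using the lifting law $f(\varphi(x,y))=y$ together with the disintegration property of $\vec{s}$ shows the second marginal is the pushforward along $\varphi$, which I expect to equal the lift of $q=\pi_{2\sharp}s$; this is exactly the statement that $f_\sharp\big(\tilde\varphi_\sharp(p,s)\big)=s$, the lifting law for the categorical lens.

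\emph{Lifting law.} I would compute $f^2_\sharp\big(\tilde\varphi_\sharp(p,s)\big)(B\times B')$ directly. Expanding $f^2_\sharp$ as in \Cref{pushforward} and then \eqref{lenscond} gives an integral of $1_{f^{-1}(B)}(x)\,1_{f^{-1}(B')}(\varphi(x,y))$ against $\vec{s}(dy|f(x))\,p(dx)$. By the lens lifting law, $f(\varphi(x,y))=y$, so $1_{f^{-1}(B')}(\varphi(x,y))=1_{B'}(y)$; the $x$-factor $1_{f^{-1}(B)}(x)=1_B(f(x))$ combines with the disintegration of $s$ along its first marginal $f_\sharp p$ to yield $s(B\times B')$. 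This requires the change-of-variables identity $\int_X g(f(x))\,\vec{s}(dy|f(x))\,p(dx)=\int_Y g(y')\,\vec{s}(dy|y')\,f_\sharp p(dy')$, which is precisely \Cref{conditionals} applied to the coupling $s$ of $f_\sharp p$ and $q$.

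\emph{Identity and composition laws.} For the identity law I would show $\tilde\varphi_\sharp(p,I_{f_\sharp p})=I_p$: here $\vec{I}_{f_\sharp p}(dy|f(x))=\delta_{f(x)}$, so \eqref{lenscond} collapses to $\int_A 1_{A'}(\varphi(x,f(x)))\,p(dx)$, and the lens identity law $\varphi(x,f(x))=x$ turns the integrand into $1_{A\cap A'}(x)$, giving $I_p(A\times A')$. The composition law is the most delicate step and the one I expect to be the main obstacle. Given couplings $s$ of $f_\sharp p$ with $q$ and $t$ of $q$ with $m$, I must show $\tilde\varphi_\sharp\big(p',\,t\big)\circ\tilde\varphi_\sharp(p,s)=\tilde\varphi_\sharp(p,\,t\circ s)$, where $p'=\pi_{2\sharp}\tilde\varphi_\sharp(p,s)$ is the lifted target of the first lift. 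Unwinding the composite coupling via its defining conditional-product formula introduces a $\cev{(\cdot)}$ conditional of the first lift, and the heart of the argument is to identify the backward conditional of $\tilde\varphi_\sharp(p,s)$ with the appropriate composite of $\cev{s}$ and the lifting $\varphi$, after which the lens composition law $\varphi(\varphi(x,y),y')=\varphi(x,y')$ is what makes the two sides agree. Care is needed because these conditionals are only defined almost surely, so I would phrase everything via the integrated identities of \Cref{conditionals} rather than pointwise, and invoke the almost-sure uniqueness to conclude.

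\emph{Weight preservation.} In the metric case, the weighted lens condition requires $\cost_k\big(\tilde\varphi_\sharp(p,s)\big)=\cost_k(s)$. Starting from $\cost_k(\tilde\varphi_\sharp(p,s))^k=\int_{X^2} c(x,x')^k\,\tilde\varphi_\sharp(p,s)(dx\,dx')$ and substituting $x'=\varphi(x,y)$ via \eqref{lenscond}, the integrand becomes $c\big(x,\varphi(x,y)\big)^k$. The metric lens condition \eqref{conddist}, $d\big(x,\varphi(x,y)\big)=d\big(f(x),y\big)$, replaces this by $c\big(f(x),y\big)^k$, and then the same disintegration identity used in the lifting law rewrites the integral over $X\times Y$ against $\vec{s}(dy|f(x))\,p(dx)$ as the integral of $c(y',y)^k$ against $s$, namely $\cost_k(s)^k$. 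Taking $k$-th roots finishes this part, and combined with the 1-Lipschitz property of $f_\sharp$ from \Cref{pushforward} it establishes that every lift is weight-preserving, completing the weighted lens.
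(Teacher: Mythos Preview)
Your proposal is correct and, for the lifting law, the identity law, and the weight-preservation step, it matches the paper's proof essentially line by line.

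The only place where you diverge from the paper is the composition law, and there you make it harder on yourself than necessary. You anticipate having to identify the backward conditional $\overleftarrow{\tilde\varphi_\sharp(p,s)}$ and treat this as ``the main obstacle''. The paper sidesteps this entirely: it observes that the \emph{forward} conditional of the lifted coupling has the explicit, $p$-independent form
\[
\overrightarrow{\tilde\varphi_\sharp(s)}(A'\mid x) \;=\; \int_Y 1_{A'}\big(\varphi(x,y)\big)\,\vec{s}\big(dy\mid f(x)\big),
\]
and then checks composition at the level of forward conditionals via Chapman--Kolmogorov, so no backward conditional of any lifted coupling ever appears. Your own fallback of ``phrasing everything via the integrated identities'' would in fact lead you to exactly this computation (integrate the second forward conditional against the first lifted coupling, then apply the lens lifting and composition laws), so your route is not wrong, merely less direct. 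The practical takeaway is that working with the forward conditional \eqref{condcoup} from the start removes all the almost-sure bookkeeping you were worried about.
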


In order to better understand the formula \eqref{lenscond}, we can look at its conditional form,
\begin{equation}\label{condcoup}
 \overrightarrow{\tilde\varphi_\sharp(s)}(A|x) = \int_Y 1_{A}\big(\varphi(x,y)\big) \, \vec{s}\big(dy|f(x)\big) .
\end{equation}
First of all, note that this conditional distribution does not depend on $p$ (the joint does). 
Now \eqref{condcoup} says that the probability of transitioning from $x$ to a point of $A$ is intuitively obtained as follows.
\begin{itemize}
 \item For each $y\in Y$, we look at the probability (given by $\vec{s}$) of going from $f(x)$ to $y$.
 \item We lift each point $y\in Y$ to the point $\varphi(x,y)$ as prescribed by the lens $(f,\varphi)$. We can do so measurably.
 \item We construct a kernel that maps $x$ to $\varphi(x,y)$ with the same probability as $s$ maps $f(x)$ to $y$.
 \item We look at the probability, as prescribed by this new kernel, that $\varphi(x,y)$ is in $A$.
\end{itemize}

 Let's now look at the rigorous proof.
\begin{proof}
 We already know by \Cref{pushforward} that $f_\sharp:PX\to PY$ is a functor, weighted if $X$ and $Y$ have cost functions and $f$ is 1-Lipschitz. 
 
 Let us now turn to $\tilde\varphi_\sharp$. We first have to show that for every $p\in PX$ and for every coupling $s\in P(Y\times Y)$ with $f_\sharp p$ as first marginal, the coupling $\tilde\varphi_\sharp(p,s)$ is a lifting of $s$ at $p$. For that, its first marginal needs to be equal to $p$, and its pushforward along $f$ down to $Y\times Y$ needs to be equal to $s$. Now for every measurable $A\subseteq X$,
 \begin{align*}
  \tilde\varphi_\sharp(p,s)(A\times X) = \int_A\int_Y 1 \, \vec{s}\big(dy|f(x)\big) \, p(dx) = \int_A p(dx) = p(A) ,
 \end{align*}
 and for every measurable $B,B'\subseteq Y$,
 \begin{align*}
  (f\times f)_\sharp \tilde\varphi_\sharp(p,s) (B\times B') &= \tilde\varphi_\sharp(p,s)\big(f^{-1}(B)\times f^{-1}(B')\big) \\
   &= \int_{f^{-1(B)}}\int_Y 1_{f^{-1}(B')}\big(\varphi(x,y)\big) \, \vec{s}\big(dy|f(x)\big) \, p(dx) \\
   &= \int_{f^{-1(B)}}\int_Y 1_{B'}\big(f(\varphi(x,y))\big) \, \vec{s}\big(dy|f(x)\big) \, p(dx) \\
   &= \int_{f^{-1(B)}}\int_Y 1_{B'}(y) \, \vec{s}\big(dy|f(x)\big) \, p(dx) \\
   &= \int_{f^{-1(B)}} \vec{s}\big(B'|f(x)\big) \, p(dx) \\
   &= \int_{B} \vec{s}\big(B'|y\big) \, f_\sharp p(dx) \\
   &= s(B\times B') ,
 \end{align*}
 where between the third and fourth line we used that $f(\varphi(x,y))=y$, i.e.~the lifting law of the lens $(f,\varphi)$.
 Therefore $\tilde\varphi_\sharp(p,s)$ is a lifting of $s$ at $p$.
 
 In order to show that that $\tilde\varphi_\sharp$ preserves identity and composition, it is better to use the conditional form of the coupling \eqref{condcoup}.
 Now, to show that the identity is lifted to the identity,
 \begin{align*}
  \overrightarrow{\tilde\varphi_\sharp(p,I_q)}(A|x) = \int_Y 1_{A}\big(\varphi(x,y)\big) \, \delta_{f(x)}(dy) = 1_{A}\big(\varphi(x,f(x))\big) = 1_{A}(x) ,
 \end{align*}
 where we used that $\varphi(x,f(x))=x$, i.e.~the identity law of the lens $(f,\varphi)$. 
 To show the composition property, consider the composable couplings $s,s'\in P(Y\times Y)$. Then
 \begin{align*}
  \int_X \overrightarrow{\tilde\varphi_\sharp(s')}(A'|x') \, \overrightarrow{\tilde\varphi_\sharp(s)}(dx'|x) 
  &= \int_X \int_Y 1_{A'}\big(\varphi(x',y')\big) \, \vec{s'}\big(dy'|f(x')\big) \int_Y \delta_{\varphi(x,y)}(dx') \, \vec{s}\big(dy|f(x)\big) \\
  &= \int_Y 1_{A'}\big(\varphi(\varphi(x,y),y')\big) \int_Y \vec{s'}\big(dy'|f(\varphi(x,y))\big) \, \vec{s}\big(dy|f(x)\big) \\
  &= \int_Y 1_{A'}\big(\varphi(x,y')\big) \int_Y \vec{s'}(dy'|y) \, \vec{s}\big(dy|f(x)\big) \\
  &= \int_Y 1_{A'}\big(\varphi(x,y')\big) \, \overrightarrow{s'\circ s} \big(dy'|f(x)\big) ,
 \end{align*}
 where between the second and third line we used that $\varphi(\varphi(x,y),y')=\varphi(x,y')$, i.e.~the composition law of the lens $(f,\varphi)$, and again its lifting law.
 
 Lastly, suppose that $X$ and $Y$ are equipped with cost functions $c_X$ and $c_Y$. Then the ($k$-th power of the) cost of the lifting is
 \begin{align*}
 \cost_k(\tilde\varphi_\sharp(p,s))^k &= \int_{X\times X} c_X(x,x')^k \, \tilde\varphi_\sharp(p,s)(dx\,dx') \\
 &= \int_{X\times X}\int_Y c_X(x,x')^k \, \delta_{\varphi(x,y)}(dx') \, \vec{s}\big(dy|f(x)\big) \, p(dx) \\
 &= \int_X\int_Y c_X\big(x,\varphi(x,y)\big)^k\,\vec{s}\big(dy|f(x)\big) \, p(dx) \\ 
 &= \int_X\int_Y c_Y\big(f(x),y\big)^k\,\vec{s}\big(dy|f(x)\big) \, p(dx) \\
 &= \int_{Y\times Y} c_Y\big(y',y\big)^k\,\vec{s}\big(dy|y'\big) \, f_\sharp p(dy) \\
 &= \int_{Y\times Y} c_Y\big(y',y\big)^k\, s(dy\,dy') = \cost_k(s)^k ,
 \end{align*}
 where between the third and the fourth line we used that $c_X\big(x,\varphi(x,y)\big) = c_Y\big(f(x),y\big)$, i.e.~the metric lens property of $(f,\varphi)$.
 Therefore the $\tilde\varphi_\sharp(p,s)$ is a weight-preserving lifting, and so $(f_\sharp,\tilde\varphi_\sharp)$ is a weighted lens.
\end{proof}

\begin{cor}
 If $(f,\varphi)$ is a metric, measurable lens between $X$ and $Y$ as in \Cref{main}, then $f:PX\to PY$ is in particular a submetry. 
 This remains true if we drop from $(f,\varphi)$ the composition requirement, as in \Cref{partialconverse}. 
\end{cor}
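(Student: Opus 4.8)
The plan is to verify the two conditions of \Cref{defsubmetry} for the pushforward $f_\sharp$, regarded as a map between the Wasserstein spaces $\Opt(P_kX)$ and $\Opt(P_kY)$, by exploiting the weighted lens $(f_\sharp,\tilde\varphi_\sharp)$ produced in \Cref{main}. The $1$-Lipschitz condition comes for free: by \Cref{pushforward} the map $f_\sharp\colon P_kX\to P_kY$ is a weighted functor, and as noted after \Cref{defwfun} every weighted functor descends to a $1$-Lipschitz map between the optimizations, which here are exactly the Wasserstein spaces of \eqref{Wasserstein}.

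For the lifting condition I would fix $p\in PX$ and a target $q'\in PY$, set $q\coloneqq f_\sharp p$, and move an optimal coupling upstairs. Concretely, I would pick a coupling $s\in\Gamma(q,q')$ attaining the distance $c_k(q,q')$ and let $p'$ be the second marginal of $\tilde\varphi_\sharp(p,s)$. \Cref{main} supplies exactly what is needed: the lifted coupling pushes forward to $s$ along $f\times f$, whence $f_\sharp p'=q'$, and it is weight-preserving, whence
$$
c_k(p,p')\le\cost_k\big(\tilde\varphi_\sharp(p,s)\big)=\cost_k(s)=c_k(q,q').
$$
The matching lower bound $c_k(q,q')=c_k(f_\sharp p,f_\sharp p')\le c_k(p,p')$ is just the $1$-Lipschitz property already in hand, so $c_k(p,p')=c_k(f_\sharp p,q')$ and the submetry lifting condition holds.

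For the last sentence I would point out that none of this uses the composition law of $(f,\varphi)$. Indeed, in the proof of \Cref{main} the composition law is invoked only to establish that the categorical lens preserves composition of couplings; the two facts I actually rely on---that $\tilde\varphi_\sharp(p,s)$ is a lift of $s$ at $p$ and that it preserves cost---follow solely from the lifting law and the metric-lens identity \eqref{conddist}. Since a submetry requires only the existence of one cost-preserving lift of each target, and not any compatibility of lifts under composition, dropping the composition law (the situation of \Cref{partialconverse}) leaves the conclusion untouched.

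The step I expect to be the genuine obstacle is the attainment of the infimum defining $c_k(q,q')$: the argument needs an actual optimal coupling $s$, so it runs cleanly precisely when $P_kY$ is optimization-complete, as recalled after \eqref{Wasserstein}. Without attainment one only gets, for each $\varepsilon>0$, a measure $p'_\varepsilon$ with $f_\sharp p'_\varepsilon=q'$ and $c_k(p,p'_\varepsilon)<c_k(q,q')+\varepsilon$, and these approximate lifts need not converge to a single $p'$ realizing the distance exactly; extracting such a limit would require a compactness argument in the fibre $f_\sharp^{-1}(q')$ that goes beyond the lens structure itself.
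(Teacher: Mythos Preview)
Your argument is correct and is precisely the derivation the paper leaves implicit (the corollary carries no proof in the paper; it is meant to follow directly from \Cref{main} together with the observation that metric lenses yield submetries). Your closing paragraph on attainment is well-placed and sharper than the paper itself: the exact-lift requirement in \Cref{defsubmetry} does need an optimal coupling to exist, so the corollary as stated tacitly relies on the optimization-completeness the paper mentions only parenthetically after \eqref{Wasserstein}.
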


Just as for the case of pushforwards, the assignment $(f,\varphi)\mapsto (f_\sharp, \tilde\varphi_\sharp)$ preserves identities and composition of lenses. 

\begin{cor}\label{funlensestolenses}
 Denote by $\cat{SBSLens}$ the category of standard Borel spaces and measurable lenses. 
 The assignment $X\mapsto PX$ and $(f,\varphi)\mapsto (f_\sharp,\tilde\varphi_\sharp)$ defines a functor $\cat{SBSLens}\to\cat{CatLens}$.
 
 Just as well, denote by $\cat{SBpqMetLens}$ the category of standard Borel spaces equipped with measurable cost functions and metric, measurable lenses. 
 For each $k$, the assignment $X\mapsto P_kX$ and $(f,\varphi)\mapsto (f_\sharp,\tilde\varphi_\sharp)$ defines a functor $\cat{SBpqMetLens}\to\cat{WCatLens}$.
\end{cor}

Finally, let's see what happens for the case of marginal distributions, the pushforwards along product projections.

\begin{eg}\label{prodlens}
 We have seen in \Cref{prodproj} that product projections admit a canonical lens structure, which is metric in the case of pq-metric spaces. 
 Let's see the explicit construction of \Cref{main} for this case. So again set $X=Y\times Z$, and consider the projection $\pi_1:Y\times Z\to Y$.
 \begin{itemize}
  \item The map $(\pi_1)_\sharp:P(Y\times Z)\to PY$ is just marginalization, it takes a measure on $Y\times Z$ and gives the marginal on $Y$. 
  \item The map $\tilde\varphi_\sharp:P(Y\times Z) \times_{PY} P(Y\times Y)\to P(Y\times Z\times Y\times Z)$ takes a joint $r\in Y\times Z$ and a coupling $s\in Y\times Y$ with common first marginal $p\in PY$, and gives the coupling $\tilde\varphi_\sharp(r,s)$ on $Y\times Z\times Y\times Z$ given as follows. For every measurable $A,A'\subseteq Y$ and $B,B'\subseteq Z$,
  \begin{align*}
  \tilde\varphi_\sharp(r,s) (A\times B\times A'\times B') &= \int_{A\times B}\int_Y 1_{A'\times B'}\big(\varphi((y,z),y')\big) \, \vec{s}\big(dy'|\pi_1(y,z)\big) \, r(dy\,dz) \\
   &= \int_{A\times B}\int_Y 1_{A'\times B'}(y',z) \, \vec{s}(dy'|y) \, r(dy\,dz) \\
   &= \int_{A\times B} 1_{B'}(z) \int_Y 1_{A'}(y') \, \vec{s}(dy'|y) \, r(dy\,dz) \\
   &= \int_{A\times B} 1_{B'}(z) \, \vec{s}(A'|y) \, r(dy\,dz) \\
   &= \int_A \int_B 1_{B'}(z) \, \vec{r}(dz|y)\, \vec{s}(A'|y) \, p(dy) \\
   &= \int_A \vec{r}(B\intersection B'|y) \, \vec{s}(A'|y) \, p(dy) .
  \end{align*}
 \end{itemize}
 In other words, the lifting given by $\tilde{\varphi}_\sharp$ is a coupling that is the identity between the $Z$-components, and is a conditional product of $r$ and $s$ along $p$ in the $Y$-components. 
\end{eg}

\addcontentsline{toc}{chapter}{\bibname}
\bibliographystyle{alpha}
\bibliography{lifting}

\newcommand{\etalchar}[1]{$^{#1}$}
\begin{thebibliography}{FGM{\etalchar{+}}07}

\bibitem[BBI01]{burago}
D~Burago, Y.~Burago, and S~Ivanov.
\newblock {\em {A Course in Metric Geometry}}, volume~33 of {\em {Graduate
  Studies in Mathematics}}.
\newblock American Mathematical Society, 2001.

\bibitem[BdSS17]{interleaving}
Peter Bubenik, Vin de~Silva, and Jonathan Scott.
\newblock Interleaving and gromov-hausdorff distance, 2017.
\newblock \href{https://arxiv.org/abs/1707.06288}{arXiv:1707.06288}.

\bibitem[Ber87]{submetries}
V.~N. Berestovskiĭ.
\newblock ``{S}ubmetries'' of three-dimensional forms of nonnegative curvature.
\newblock {\em Sibirsk. Mat. Zh.}, 28(4):44--56, 1987.
\newblock In Russian.

\bibitem[BG75]{categorienormate}
Renato Betti and Massimo Galuzzi.
\newblock Categorie normate.
\newblock {\em Bollettino dell'Unione Matematica Italiana}, 11(1):66--75, 1975.

\bibitem[Bog00]{bogachev}
V.~I. Bogachev.
\newblock {\em Measure Theory. {V}ol. {I}, {II}}.
\newblock Springer, 2000.

\bibitem[BP10]{weaksubmetries}
V.~N. Berestovskiĭ and C.~P. Plaut.
\newblock Covering {R}-trees, {R}-free groups, and dendrites.
\newblock {\em Advances in Mathematics}, 224(5):1765--1783, 2010.

\bibitem[CDM22]{enriched-lenses}
Bryce Clarke and Matthew Di~Meglio.
\newblock An introduction to enriched cofunctors, 2022.
\newblock \href{https://arxiv.org/abs/2209.01144}{arXiv: 2209.01144}.

\bibitem[Cla20a]{setbased}
Bryce Clarke.
\newblock Internal lenses as functors and cofunctors.
\newblock In {\em Proceedings of Applied Category Theory 2019}, volume 323 of
  {\em Electronic Proceedings in Theoretical Computer Science}, pages 183--185.
  2020.

\bibitem[Cla20b]{bryce}
Bryce Clarke.
\newblock Internal split opfibrations and cofunctors.
\newblock {\em Theory and Applications of Categories}, 35(44):1608--1633, 2020.

\bibitem[DS99]{cprod}
A.~Philip Dawid and Milan Studený.
\newblock Conditional products: an alternative approach to conditional
  independence.
\newblock In {\em Artificial Intelligence and Statistics '99}, pages 32--40.
  1999.

\bibitem[DXC11]{delta-lens-first}
Zinovy Diskin, Yingfei Xiong, and Krzysztof Czarnecki.
\newblock From state- to delta-based bidirectional model transformations: the
  asymmetric case.
\newblock {\em Journal of Object Technology}, 10, 2011.

\bibitem[FGM{\etalchar{+}}07]{lens-first}
J.~Nathan Foster, Michael~B. Greenwald, Jonathan~T. Moore, Benjamin~C. Pierce,
  and Alan Schmitt.
\newblock Combinators for bidirectional tree transformations: A linguistic
  approach to the view-update problem.
\newblock {\em ACM Transactions on Programming Languages and Systems}, 29(3),
  2007.

\bibitem[FP19]{ours_kantorovich}
Tobias Fritz and Paolo Perrone.
\newblock A probability monad as the colimit of spaces of finite samples.
\newblock {\em Theory and Applications of Categories}, 34(7):170--220, 2019.
\newblock \href{https://arxiv.org/abs/1712.05363}{arXiv:1712.05363}.

\bibitem[FP20]{ours_ordered}
Tobias Fritz and Paolo Perrone.
\newblock Stochastic order on metric spaces and the ordered kantorovich monad.
\newblock {\em Advances in Mathematics}, 370, 2020.
\newblock \href{https://arxiv.org/abs/1808.09898}{arXiv:1808.09898}.

\bibitem[Fre06]{fremlin}
D.~H. Fremlin.
\newblock {\em Measure theory. {V}ol. 4}.
\newblock Torres Fremlin, Colchester, 2006.
\newblock Topological measure spaces. Part I, II, Corrected second printing of
  the 2003 original.

\bibitem[Fri20]{markov}
Tobias Fritz.
\newblock A synthetic approach to {M}arkov kernels, conditional independence
  and theorems on sufficient statistics.
\newblock {\em Adv. Math.}, 370:107239, 2020.
\newblock \href{https://arxiv.org/abs/1908.07021}{arXiv:1908.07021}.

\bibitem[Gir82]{giry}
Michèle Giry.
\newblock {A Categorical Approach to Probability Theory}.
\newblock In {\em {Categorical aspects of topology and analysis}}, volume 915
  of {\em {Lecture Notes in Mathematics}}. 1982.

\bibitem[Gra07]{grandisweights}
Marco Grandis.
\newblock Categories, norms and weights.
\newblock {\em Journal of homotopy and related structures}, 2(2):171--186,
  2007.

\bibitem[JRW10]{algebras-update-strategies}
Michael Johnson, Robert Rosebrugh, and Richard Wood.
\newblock Algebras and update strategies.
\newblock {\em Journal of Universal Computer Science}, 16(5):729--748, 2010.

\bibitem[Kel82]{basicconcepts}
G.~M. Kelly.
\newblock {\em Basic concepts of enriched category theory}.
\newblock Cambridge University Press, 1982.

\bibitem[Kub17]{categorieswithnorms}
Wiesław Kubiś.
\newblock Categories with norms, 2017.
\newblock \href{https://arxiv.org/abs/1705.10189}{arXiv:1705.10189}.

\bibitem[Law73]{lawvere}
William Lawvere.
\newblock {Metric spaces, generalized logic and closed categories}.
\newblock {\em Rendiconti del seminario matematico e fisico di Milano}, 43,
  1973.

\bibitem[LI21]{schroderbernstein}
Daniel Luckhardt and Matt Insall.
\newblock Norms on categories and analogs of the schröder-bernstein theorem,
  2021.
\newblock \href{http://arxiv.org/abs/2105.06832}{arXiv:2105.06832}.

\bibitem[Sim18]{simpson}
Alex Simpson.
\newblock Category-theoretic structure for independence and conditional
  independence.
\newblock In {\em Proceedings of MFPS 2017}, volume 336 of {\em Electronic
  Notes in Theoretical Computer Science}, pages 281--297. 2018.

\bibitem[SN22]{poly-book}
David~I. Spivak and Nelson Niu.
\newblock Polynomial functors: a general theory of interaction.
\newblock Available online at
  \href{https://topos.site/poly-book.pdf}{topos.site/poly-book.pdf}, 2022.

\bibitem[vB05]{breugel}
Franck van Breugel.
\newblock {The Metric Monad for Probabilistic Nondeterminism}.
\newblock
  \href{http://www.cse.yorku.ca/~franck/research/drafts/monad.pdf}{Available at
  http://www.cse.yorku.ca}, 2005.

\bibitem[Vil09]{villani}
Cédric Villani.
\newblock {\em {Optimal transport: old and new}}, volume 338 of {\em
  {Grundlehren der mathematischen Wissenschaften}}.
\newblock Springer, 2009.

\end{thebibliography}
 
\end{document}